\newtheorem{theorem}{Theorem}[section]
\newtheorem{proposition}{Proposition}[section]
\newtheorem{lemma}[theorem]{Lemma}
\newtheorem{corollary}[theorem]{Corollary}
\theoremstyle{definition}
\newtheorem{definition}[theorem]{Definition}
\theoremstyle{remark}
\newtheorem{acknowledgement}{Acknowledgement}
\numberwithin{equation}{section}
\begin{document}
\title[Quasilinear elliptic systems involving variable exponents]{Multiple
solutions for quasilinear elliptic systems involving variable exponents}
\subjclass[2010]{ 35J60; 35P30; 47J10; 35A16; 35D30}
\keywords{$p(x)$-Laplacian; topological degree theory; variable exponent;
sub-supersolutions; homotopy.}

\begin{abstract}
We establish the existence of multiple solutions for a nonvariational
elliptic systems involving $p(x)$-Laplacian operator. The approach combines
the methods of sub-supersolution and Leray--Schauder topological degree.
\end{abstract}

\author{Abdelkrim Moussaoui}
\address{Abdelkrim Moussaoui\\
Applied Mathematics Laboratory (LMA), Faculty of Exact Sciences,\\
and Biology department, Faculty of natural and life sciences,\\
A. Mira Bejaia University, Algeria}
\email{abdelkrim.moussaoui@univ-bejaia.dz}
\author{Jean V\'{e}lin}
\address{Jean V\'{e}lin\\
D\'{e}partement de Math\'{e}matiques et Informatique, Laboratoire LAMIA,
Universit\'{e} des Antilles, Campus de Fouillole 97159 Pointe-\`{a}- Pitre,
Guadeloupe (FWI)}
\email{jean.velin@univ-antilles.fr}
\maketitle

\section{Introduction}

Let $\Omega $ be a bounded domain in $%
\mathbb{R}
^{N}$ ($N\geq 2$) with smooth boundary $\partial \Omega $. Given $p_{i}\in
C^{1}(\overline{\Omega }),$ $1<p_{i}^{-}\leq p_{i}^{+}<N$ with%
\begin{equation*}
\begin{array}{l}
p_{i}^{-}=\inf_{x\in \Omega }p_{i}(x)\text{ \ and \ }p_{i}^{+}=\sup_{x\in
\Omega }p_{i}(x),%
\end{array}%
\end{equation*}

we consider the quasilinear elliptic system%
\begin{equation*}
(\mathrm{P})\qquad \left\{ 
\begin{array}{ll}
-\Delta _{p_{1}(x)}u_{1}=f_{1}(x,u_{1},u_{2}) & \text{in }\Omega \\ 
-\Delta _{p_{2}(x)}u_{2}=f_{2}(x,u_{1},u_{2}) & \text{in }\Omega \\ 
u_{1},u_{2}=0 & \text{on }\partial \Omega ,%
\end{array}%
\right.
\end{equation*}%
where $\Delta _{p_{i}(x)}$ stands for $p_{i}(x)$-Laplacian differential
operator on $W_{0}^{1,p_{i}(x)}(\Omega )$ and the nonlinearities $%
f_{i}:\Omega \times \mathbb{R\times \mathbb{R}}\rightarrow \mathbb{R}$, $%
i=1,2$, are Carath\'{e}odory functions, i.e., $f_{i}(\cdot ,s_{1},s_{2})$ is
measurable for all $s_{1},s_{2}\in \mathbb{R}$ and $f_{i}(x,\cdot ,\cdot )$
is continuous for a.e. $x\in \Omega ,$ satisfying the following conditions:

\begin{description}
\item[$(\mathrm{H.1})$] $f_{1}$ and $f_{2}$ are bounded in bounded domain.

\item[$(\mathrm{H.2})$] There exists a constant $\eta _{i}>\lambda
_{1,p_{i}}\left\Vert \phi _{1,p_{i}}\right\Vert _{\infty }^{p_{i}^{+}-1}$
such that 
\begin{equation*}
\begin{array}{l}
\eta _{i}\leq \underset{s_{i}\rightarrow 0^{+}}{\liminf }\frac{%
f_{i}(x,s_{1},s_{2})}{s_{i}^{p_{i}^{-}-1}}%
\end{array}%
\end{equation*}%
$\ $ uniformly for a.e. $x\in \Omega $, all $s_{j}>0$, $i\not=j$, 
\begin{equation*}
\begin{array}{l}
\eta _{i}\leq \underset{s_{i}\rightarrow 0^{-}}{\liminf }\frac{%
f_{i}(x,s_{1},s_{2})}{|s_{i}|^{p_{i}^{-}-2}s_{i}}%
\end{array}%
\end{equation*}%
$\ $ uniformly for a.e. $x\in \Omega $, all $s_{j}<0$, $i\not=j$, $i=1,2$.
\end{description}

Here, $\lambda _{1,p_{i}}$ and $\phi _{1,p_{i}}$ denote the first eigenvalue
and the corresponding eigenfunction of $p_{i}(x)$-Laplacian operator,
respectively, for $i=1,2$.

\begin{description}
\item[$(\mathrm{H.3})$] 
\begin{equation*}
\begin{array}{l}
\underset{|s_{i}|\rightarrow \infty }{\lim }\sup \frac{f_{i}(x,s_{1},s_{2})}{%
|s_{i}|^{p_{i}^{-}-2}s_{i}}=0,%
\end{array}%
\end{equation*}%
uniformly for a.e. $x\in \Omega $, all $s_{j}\in 
\mathbb{R}
,$ $i,j=1,2,j\neq i,$ $i=1,2.$
\end{description}

\mathstrut

A solution $(u_{1},u_{2})\in W_{0}^{1,p_{1}(x)}(\Omega )\times
W_{0}^{1,p_{2}(x)}(\Omega )$ of problem $(\mathrm{P})$ is understood in the
weak sense, that is%
\begin{equation*}
\int_{\Omega }\left\vert \nabla u_{i}\right\vert ^{p_{i}(x)-2}\nabla
u_{i}\nabla \varphi _{i}\ \mathrm{d}x=\int_{\Omega
}f_{i}(x,u_{1},u_{2})\varphi _{i}\ \mathrm{d}x,
\end{equation*}%
for all $\varphi _{i}\in W_{0}^{1,p_{i}(x)}\left( \Omega \right) $.

Throughout this paper, we assume:

\begin{description}
\item[\textrm{(H}$_{p}$\textrm{)}] One of the following condition holds:

$\mathbf{(\mathrm{i})}$ There are two vectors $l_{i}\in \mathbb{R}%
^{N}\backslash \{0\}$ such that for all $x\in \Omega ,$ $%
h_{i}(t_{i})=p_{i}(x+t_{i}l_{i})$ are monotone for $t_{i}\in
I_{i,x}=\{t_{i};\,\,x+t_{i}l_{i}\in \Omega \},i=1,2.$

$\mathbf{(\mathrm{ii})}$ There is $x_{i}\notin \overline{\Omega }$ such that
for all $w_{i}\in \mathbb{R}\backslash \{0\}$ with $\Vert w_{i}\Vert =1,$
the function $h_{i}(t_{i})=p_{i}(x_{i}+t_{i}w_{i})$ is monotone for $%
t_{i}\in I_{x_{i},w_{i}}=\{t_{i}\in \mathbb{R};\,\,x_{i}+t_{i}w_{i}\in
\Omega \},$ for $i=1,2.$
\end{description}

Assumption $($\textrm{H}$_{p})$ ensures that Dirichlet problem%
\begin{equation}
-\Delta _{p_{i}(x)}u=\lambda |u|^{p_{i}(x)-2}u\text{ \ in }\Omega ,\text{ \ }%
u=0\text{ \ on }\partial \Omega ,  \label{26}
\end{equation}%
admits a first eigenvalue $\lambda _{1,p_{i}}>0$ caracterized by%
\begin{equation}
\lambda _{1,p_{i}}=\inf_{u\in W_{0}^{1,p_{i}(x)}(\Omega )\backslash \{0\}}%
\frac{\int_{\Omega }|\nabla u|^{p_{i}(x)}\,\mathrm{d}x}{\int_{\Omega
}|u|^{p_{i}(x)}\,\mathrm{d}x}  \label{71}
\end{equation}%
and the corresponding eigenfunction $\phi _{1,p_{i}}$ satisfies 
\begin{equation}
\phi _{1,p_{i}}\in C^{1}(\overline{\Omega }),\text{ }\phi _{1,p_{i}}>0\text{
in }\Omega \text{ and }\frac{\partial \phi _{1,p_{i}}}{\partial \nu }<0\text{
on }\partial \Omega  \label{27}
\end{equation}%
(see \cite{F, FZZ2}). Actually, assumption $($\textrm{H}$_{p})$ enables to
outfit $p_{i}(x)$-Laplacian operator with an important spectral property
that will be useful later on. However, this property alone does not make the
study of $(\mathrm{P})$ any easier because of the lack of properties such as
homogeneity. This fact complicates handling $p_{i}(x)$-Laplacian operator
and constitutes a serious technical difficulty to address problem $(\mathrm{P%
})$. Moreover, notice that system $(\mathrm{P})$ is not in variational form,
so the variational methods are not applicable.

Problems driven by the $p_{i}(x)$-Laplacian operator are involved in various
nonlinear processes related to electrorheological fluids \cite{AM1, R}, and
image restorations \cite{CLR}. When $p_{i}(\cdot )$ is reduced to be a
constant, $\Delta _{p_{i}(x)}$ becomes the well-known $p_{i}$-Laplacian
operator. In this context, system $(\mathrm{P})$ has been thoroughly
investigated in the litterature (see, e.g., \cite{DM, KM, MMP2, MM, MV1} and
the references therein). However, considering that $p(x)$-Laplacian operator
possesses more complicated nonlinearity, stretching out results of the
above-mentioned papers to problems involving $p(x)$-Laplacian operator is
not a straightforward task. This partly explains the few existing works in
the literature devoted to this topic. Actually, elliptic systems without
variational structure, possibly involving singularities near the origin, are
studied in \cite{AM, AMT, MV2, Z} while the variational case is considered
in \cite{MV3}. It should be noted that the systems considered in the
aforementioned papers do not fit the setting of $(\mathrm{P})$ under
assumptions $(\mathrm{H.1})$-$(\mathrm{H.3})$.

Surprisingly enough, excepting the quoted papers where existence of a
positive solution is obtained, so far we were not able to find previous
results providing more than one nontrivial solution for $(\mathrm{P})$.
Motivated by this fact, our main concern is the question of existence of
multiple solutions for a system of quasilinear elliptic equations $(\mathrm{P%
})$. We first establish the existence of opposite constant-sign solutions to
system $(\mathrm{P})$, which means the existence of a positive solution $%
(u_{1,+},u_{2,+})$ and a negative solution $(u_{1,-},u_{2,-})$ in the sense
that both components $u_{1,+},u_{2,+}$ are positive, and both components $%
u_{1,-},u_{2,-}$ are negative. Our approach is chiefly based on
sub-supersolutions method where a significant feature of our result lies in
the obtaining of the sub- and supersolutions for $(\mathrm{P})$. At this
point, the choice of suitable functions as well as an adjustment of adequate
constants is crucial. However, it is worth notting that the obtained sub-
and supersolution are quite different from the functions considered in the
quoted papers, especially those constructed in \cite{AM, AMT}. Practically
and contrary to preconceived ideas, the construction process of the sub- and
super-solutions in the present work is broadly similar to the one used in
the case of constant exponent problems (see, e.g., \cite{DKM, MM2, MM3}),
despite the loss of the homogeneity property of the operator $\Delta
_{p_{i}(x)},$ which constitutes in itself a major obstacle to face. The
crucial aspect of the argument is the new Mean Value Theorem (cf. Lemma 
\ref{L5}) which, henceforth, would become an essential tool to handle
problems with variable exponents.

The first main result is formulated as follows.

\begin{theorem}
\label{T1} Assume that conditions $(\mathrm{H.1})$, $(\mathrm{H.2})$ and $(%
\mathrm{H.3})$ hold. Then problem $(\mathrm{P})$ possesses at least a
positive solution $(u_{1,+},u_{2,+})$ and a negative solution $%
(u_{1,-},u_{2,-})$ in $C^{1,\sigma }(\overline{\Omega })\times C^{1,\sigma }(%
\overline{\Omega }),$ for certain $\sigma \in (0,1)$.
\end{theorem}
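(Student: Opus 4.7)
The plan is to establish the positive solution $(u_{1,+},u_{2,+})$ by the sub-supersolution method, and then to obtain the negative solution by an entirely analogous argument with reversed signs. So the core of the proof consists of producing an ordered pair of sub- and supersolutions for the positive case, invoking a known existence result for sub-supersolutions on $p_{i}(x)$-Laplacian systems, and then upgrading regularity.

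For the subsolution pair, I would set $\underline{u}_i = \varepsilon\,\phi_{1,p_i}$ with $\varepsilon>0$ to be taken small, and aim to verify
\begin{equation*}
-\Delta_{p_i(x)}\underline{u}_i \leq f_i(x,\underline{u}_1,\underline{u}_2) \quad\text{in }\Omega.
\end{equation*}
The eigenvalue identity reads $-\Delta_{p_i(x)}\phi_{1,p_i} = \lambda_{1,p_i}\phi_{1,p_i}^{p_i(x)-1}$, but since $-\Delta_{p_i(x)}$ is not homogeneous the factor $\varepsilon$ cannot simply be pulled outside. Here I would use the new Mean Value Theorem (Lemma~\ref{L5}) to control $-\Delta_{p_i(x)}(\varepsilon\phi_{1,p_i})$ by an expression whose dominant $\varepsilon$-rate is $\varepsilon^{p_i^{+}-1}$ and whose size involves $\lambda_{1,p_i}\|\phi_{1,p_i}\|_\infty^{p_i^{+}-1}\phi_{1,p_i}^{p_i^{-}-1}$. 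On the right-hand side, hypothesis $(\mathrm{H.2})$ yields $f_i(x,\underline{u}_1,\underline{u}_2)\geq (\eta_i - o(1))\,(\varepsilon\phi_{1,p_i})^{p_i^{-}-1}$ as $\varepsilon\to 0^{+}$. Because $\eta_i$ has been calibrated to strictly exceed $\lambda_{1,p_i}\|\phi_{1,p_i}\|_\infty^{p_i^{+}-1}$, a sufficiently small $\varepsilon$ closes the inequality pointwise in $\Omega$.

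For the supersolution pair, I would take $\overline{u}_i = M_i$ (large positive constant), and, if required, deform near $\partial\Omega$ using a $p_i(x)$-torsion-like auxiliary function so that the boundary condition $\overline{u}_i\geq 0$ on $\partial\Omega$ is preserved while $-\Delta_{p_i(x)}\overline{u}_i\geq 0$ in $\Omega$. Hypothesis $(\mathrm{H.3})$ gives $f_i(x,s_1,s_2)=o(|s_i|^{p_i^{-}-1})$ as $|s_i|\to\infty$ uniformly, so for $M_1,M_2$ large enough $f_i(x,M_1,M_2)\leq 0\leq -\Delta_{p_i(x)}\overline{u}_i$, which is the required supersolution inequality; $(\mathrm{H.1})$ is needed to ensure the nonlinearities do not blow up on the chosen bounded range. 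Choosing $\varepsilon$ small and $M_i$ large simultaneously yields $\underline{u}_i\leq\overline{u}_i$ and produces an admissible order interval $[\underline{u}_1,\overline{u}_1]\times[\underline{u}_2,\overline{u}_2]$ with $\underline{u}_i>0$ in $\Omega$. A standard sub-supersolution principle for $p_i(x)$-Laplacian systems (truncation of the nonlinearity within the interval plus a pseudomonotone/fixed-point argument) then delivers a weak solution $(u_{1,+},u_{2,+})$ in this interval, which is automatically a positive solution of $(\mathrm{P})$. Finally, $(\mathrm{H.1})$ places the right-hand sides in $L^{\infty}(\Omega)$, so the Fan-type $C^{1,\sigma}$ regularity for the $p_i(x)$-Laplacian applies and gives the stated regularity.

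The negative solution is produced symmetrically: $\overline{u}_i = -\varepsilon\phi_{1,p_i}$ is a supersolution by the $s_i\to 0^{-}$ branch of $(\mathrm{H.2})$ (again via Lemma~\ref{L5} to cope with non-homogeneity), and $\underline{u}_i = -M_i$ is a subsolution by $(\mathrm{H.3})$ as $s_i\to -\infty$. The main obstacle in the entire argument is the subsolution step: the absence of homogeneity for $-\Delta_{p_i(x)}$ breaks the classical scaling trick on the first eigenfunction, and the whole construction hinges on having Lemma~\ref{L5} to quantify precisely how $-\Delta_{p_i(x)}(\varepsilon v)$ deviates from $\varepsilon^{p_i(x)-1}(-\Delta_{p_i(x)}v)$, an error that the hypothesis $\eta_i>\lambda_{1,p_i}\|\phi_{1,p_i}\|_\infty^{p_i^{+}-1}$ has been formulated exactly to absorb.
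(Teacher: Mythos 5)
Your subsolution is essentially the paper's: $\underline{u}_i=\varepsilon\,\phi_{1,p_i}$, with the non-homogeneity of $-\Delta_{p_i(x)}$ handled through the Mean Value Lemma (Lemma \ref{L5}, via Corollary \ref{C}) and the inequality closed by the calibration $\eta_i>\lambda_{1,p_i}\Vert\phi_{1,p_i}\Vert_\infty^{p_i^+-1}$. Apart from a bookkeeping slip (for $\varepsilon<1$ the relevant \emph{upper} bound on $-\Delta_{p_i(x)}(\varepsilon\phi_{1,p_i})$ scales like $\varepsilon^{p_i^--1}$, not $\varepsilon^{p_i^+-1}$, which is exactly why it must be matched against $\eta_i(\varepsilon\phi_{1,p_i})^{p_i^--1}$), this part is sound. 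The supersolution, however, contains a genuine gap. You take $\overline{u}_i=M_i$ constant and claim that $(\mathrm{H.3})$ gives $f_i(x,M_1,M_2)\le 0$ for $M_i$ large. That is a non sequitur: $(\mathrm{H.3})$ only controls the ratio $f_i(x,s_1,s_2)/(|s_i|^{p_i^--2}s_i)$ at infinity and is perfectly compatible with $f_i(x,s_1,s_2)\to+\infty$ sublinearly as $s_i\to+\infty$; nothing in $(\mathrm{H.1})$--$(\mathrm{H.3})$ forces $f_i$ to be nonpositive anywhere. Since $-\Delta_{p_i(x)}M_i=0$, the required inequality $0\ge\int_\Omega f_i(x,u_1,u_2)\varphi_i\,\mathrm{d}x$ then fails. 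Worse, the sub-supersolution notion used for this nonvariational system requires the supersolution inequality for \emph{all} $(u_1,u_2)$ in the order interval, not merely at the top endpoint, and near the subsolution $(\mathrm{H.2})$ forces $f_i>0$; hence no supersolution with vanishing $p_i(x)$-Laplacian can ever work here. The same defect recurs in your negative subsolution $\underline{u}_i=-M_i$, since $(\mathrm{H.2})$ makes $f_i<0$ for $s_i$ near $0^-$.

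What is actually needed is a supersolution whose $p_i(x)$-Laplacian is strictly positive and large, of size at least $c_\rho+\bar\eta\,\overline{u}_i^{\,p_i^--1}$, where $c_\rho$ and $\bar\eta$ come from the global growth bound $|f_i(x,s_1,s_2)|\le c_\rho+\bar\eta|s_i|^{p_i^--1}$ that $(\mathrm{H.1})$ and $(\mathrm{H.3})$ jointly provide. The paper achieves this by taking $\overline{u}_i=\varepsilon^{-1}\tilde\phi_{1,p_i}$, where $\tilde\phi_{1,p_i}$ is the first eigenfunction on an enlarged domain $\tilde\Omega\supset\overline\Omega$ and is therefore bounded below by some $\tau>0$ on $\overline\Omega$; Corollary \ref{C} then yields
\begin{equation*}
-\Delta_{p_i(x)}\overline{u}_i\ \ge\ \varepsilon^{-(p_i^--1)}\tfrac{\tilde\lambda_{1,p_i}}{2}\bigl(\tau^{p_i^+-1}+\tilde\phi_{1,p_i}^{p_i(x)-1}\bigr)\ \ge\ c_\rho+\bar\eta\,\overline{u}_i^{\,p_i^--1}\ \ge\ f_i(x,s_1,s_2)
\end{equation*}
for all $(s_1,s_2)$ in the order interval, once $\varepsilon$ is small. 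A torsion-type choice $\overline{u}_i=Kw_i$ with $-\Delta_{p_i(x)}w_i=1$ could probably be made to work along the lines you hint at, but you would still have to verify the inequality against the full growth of $f_i$ over the interval rather than its sign at $(M_1,M_2)$, and the same repair must be carried out for the negative pair.
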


\mathstrut

Our next goal is to provide the existence of a second positive solution $(%
\breve{u}_{1},\breve{u}_{2})$ for system $(\mathrm{P})$. To this end, we
must strengthen hypothesis $(\mathrm{H.1})$ by the following assumption.

\begin{description}
\item[$(\mathrm{H}^{\prime }\mathrm{.1})$] 

$(\mathrm{i})$ $f_{i}(x,s_{1},s_{2})\geq 0$ uniformly for a.e. $x\in \Omega $%
, all $s_{i}\in 
\mathbb{R}
,$ $i=1,2.$

$(\mathrm{ii})$ For each $\delta >0,$ there exists $M=M(\delta )>0$ such
that 
\begin{equation*}
|f_{i}(x,s_{1},s_{2})|\leq M,\text{ for a.e. }x\in \Omega \text{, }%
|s_{i}|\leq \delta \text{, all }s_{j}\in 
\mathbb{R}
,i,j=1,2,\text{ }j\neq i.
\end{equation*}
\end{description}

The second main result is stated as follows.

\begin{theorem}
\label{T2} Assume that conditions $(\mathrm{H}^{\prime }\mathrm{.1}),$ $(%
\mathrm{H.2})$ and $(\mathrm{H.3})$ hold. Then problem $(\mathrm{P})$ admits
a solution $(\breve{u}_{1},\breve{u}_{2})$ in $W_{0}^{1,p_{1}(x)}(\Omega
)\times W_{0}^{1,p_{2}(x)}(\Omega )$ such that%
\begin{equation*}
\breve{u}_{1}\neq u_{1,+}\text{ \ and }\breve{u}_{2}\neq u_{2,+}.
\end{equation*}
\end{theorem}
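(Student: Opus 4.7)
The strategy is to apply Leray--Schauder topological degree to a truncated fixed-point operator, exploiting the super-eigenvalue behavior of $f_{i}$ near zero from $(\mathrm{H.2})$ and the sublinear growth at infinity from $(\mathrm{H.3})$ to produce a second fixed point distinct from $(u_{1,+},u_{2,+})$. The sign condition $(\mathrm{H}^{\prime }\mathrm{.1})(\mathrm{i})$ guarantees that any such fixed point automatically produces a nonnegative solution of $(\mathrm{P})$.

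First I would define the operator $\mathcal{T}:C_{0}^{1}(\overline{\Omega })^{2}\to C_{0}^{1}(\overline{\Omega })^{2}$ by $\mathcal{T}(w_{1},w_{2})=(u_{1},u_{2})$, where $u_{i}$ is the unique weak solution of $-\Delta _{p_{i}(x)}u_{i}=f_{i}(x,w_{1}^{+},w_{2}^{+})$ with $u_{i}=0$ on $\partial \Omega $. By $(\mathrm{H}^{\prime }\mathrm{.1})(\mathrm{i})$ together with the weak comparison principle every fixed point of $\mathcal{T}$ is nonnegative, and $(\mathrm{H}^{\prime }\mathrm{.1})(\mathrm{ii})$ combined with the $C^{1,\sigma }$-regularity theory for $p(x)$-Laplacian equations ensures that $\mathcal{T}$ is completely continuous. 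Using $(\mathrm{H.3})$, which provides $|f_{i}(x,s_{1},s_{2})|\leq \varepsilon |s_{i}|^{p_{i}^{-}-1}+C_{\varepsilon }$ for every $\varepsilon >0$, I would test the identity $u=\tau \mathcal{T}(u)$ (for $\tau \in [0,1]$) against $u_{i}$, invoke the variational characterization (\ref{71}) of $\lambda _{1,p_{i}}$ with $\varepsilon $ small, and derive a uniform a priori bound. Consequently, for $R$ sufficiently large, $\deg(I-\mathcal{T},B_{R},0)=\deg(I,B_{R},0)=1$.

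Next, I would compute a small-ball degree. Using $(\mathrm{H.2})$ and the strict inequality $\eta _{i}>\lambda _{1,p_{i}}\Vert \phi _{1,p_{i}}\Vert _{\infty }^{p_{i}^{+}-1}$, I would show by a Krasnoselskii-type argument that the equation $u-\mathcal{T}(u)=\sigma \Psi $, with the admissible direction $\Psi =(\phi _{1,p_{1}},\phi _{1,p_{2}})$, admits no solution on $\partial B_{\rho }$ for any $\sigma \geq 0$ and any $\rho $ sufficiently small; this yields $\deg(I-\mathcal{T},B_{\rho },0)=0$. By additivity, $\deg(I-\mathcal{T},B_{R}\setminus \overline{B_{\rho }},0)=1$, and the first solution $(u_{1,+},u_{2,+})$ from Theorem \ref{T1} lies in this annular region.

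Finally, to extract a second fixed point, I would isolate $(u_{1,+},u_{2,+})$ in a small open neighborhood $V\subset B_{R}\setminus \overline{B_{\rho }}$ and compute its fixed-point index by revisiting the sub-supersolution construction of Theorem \ref{T1} locally. Using an auxiliary operator obtained by truncating $\mathcal{T}$ to the order interval $[0,u_{1,+}]\times [0,u_{2,+}]$, one shows that the index of $(u_{1,+},u_{2,+})$ with respect to $\mathcal{T}$ differs from the value $1$ dictated by additivity; equivalently $\deg(I-\mathcal{T},(B_{R}\setminus \overline{B_{\rho }})\setminus \overline{V},0)\neq 0$, producing a second fixed point $(\breve{u}_{1},\breve{u}_{2})$ with $\breve{u}_{i}\neq u_{i,+}$. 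I expect the main obstacle to be the small-ball degree computation together with the index computation at $(u_{1,+},u_{2,+})$: both demand a delicate use of Lemma \ref{L5} (the new mean value theorem for the $p(x)$-Laplacian) to convert the pointwise threshold $\eta _{i}$ of $(\mathrm{H.2})$ into an integral comparison with the first eigenvalue in the absence of the homogeneity of $\Delta _{p_{i}(x)}$, a step that the constant-exponent arguments handle by straightforward scaling.
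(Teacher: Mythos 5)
Your degree bookkeeping is inverted relative to the paper's, and as a consequence the entire weight of the multiplicity falls on your final step, which is asserted rather than proved. You compute $\deg(I-\mathcal{T},B_{R},0)=1$ on the large ball (homotopy to the identity) and $\deg(I-\mathcal{T},B_{\rho},0)=0$ on a small ball around the origin, so the annulus carries degree $1$; but $(u_{1,+},u_{2,+})$ already lies in that annulus, so additivity alone yields nothing new. Everything then hinges on your claim that the local index of $(u_{1,+},u_{2,+})$ (or of the solution set in the order interval) ``differs from $1$.'' No argument is given for this, and the standard computation goes the other way: truncating the operator to $[0,\overline{u}_{1}]\times[0,\overline{u}_{2}]$ and using the sub-supersolution structure gives index $+1$ for the set of solutions inside the order interval, which is perfectly consistent with the annulus degree being $1$ and forces no second solution. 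Moreover $(u_{1,+},u_{2,+})$ need not be an isolated zero, so ``its index'' may not even be defined. This is a genuine gap: your three degree values ($1$ on $B_{R}$, $0$ on $B_{\rho}$, $1$ on the annulus) are all compatible with $(\mathrm{P})$ having exactly one nontrivial nonnegative solution.

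The paper avoids this by arranging the opposite configuration: the degree on the \emph{large} ball is $0$, obtained from a homotopy whose $t=0$ endpoint is the perturbed problem (\ref{23}), which has \emph{no} solutions at all by the nonexistence Lemma \ref{L9} (this is exactly where the extra forcing term $\delta\lambda_{1,p_i}\phi_{1,p_i}^{p_i(x)-1}$ and the Picone/mean-value machinery of Lemmas \ref{L4} and \ref{L5} are used); the degree is $1$ on a smaller ball $\mathcal{B}_{\hat R}$ chosen large enough to contain the whole order interval $[-\overline{u}_{1},\overline{u}_{1}]\times[-\overline{u}_{2},\overline{u}_{2}]$ (via a second homotopy without the forcing term, whose $t=0$ problem has only the trivial solution). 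Additivity then gives degree $-1\neq 0$ on $\mathcal{B}_{R}\setminus\overline{\mathcal{B}_{\hat R}}$, a region disjoint from the order interval, so the solution found there is automatically distinct from $(u_{1,+},u_{2,+})$ with no local index computation needed. Two further remarks: your small-ball Krasnoselskii argument is the right idea but is deployed in the wrong place (the translate $\sigma\Psi$ plays the role of the paper's $\delta\lambda_{1,p}\phi_{1,p}^{p(x)-1}$, and the resulting degree $0$ must be produced on the \emph{outer} region, not near the origin); and your large-ball value $1$ is in direct tension with the paper's value $0$ for the same solution set, so if you retain your homotopy to the identity you must explain which admissibility claim fails --- the paper's resolution is precisely Lemma \ref{L9}.
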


The proof is based on topological degree theory with suitable truncation as
well as the Mean Value Theorem (cf. Lemma \ref{L5}). Precisely, we prove
that the degree on a ball $\mathcal{B}_{\tilde{R}}$ containing the obtained
solutions in Theorem \ref{T1} is equal to $1$ while the degree in a bigger
ball $\mathcal{B}_{R}\supset \mathcal{B}_{\tilde{R}},$ with $\tilde{R}<R,$
holding all potential solutions of $(\mathrm{P})$ is $0$. By the excision
property of Leray-Schauder degree, this leads to the existence of a solution
for $(\mathrm{P})$\ different from those obtained in Theorem \ref{T1}.

The rest of the paper is organized as follows. Section \ref{S2} contains
some technical and useful results; Section \ref{S4} deals with the existence
of opposite constant-sign solutions; Section \ref{S3} establishes the
existence of multiple positive solutions.

\section{Preliminaries and technical results}

\label{S2}

Let $L^{p_{i}(x)}(\Omega )$ be the generalized Lebesgue space that consists
of all measurable real-valued functions $u$ satisfying%
\begin{equation*}
\begin{array}{l}
\rho _{p_{i}(x)}(u)=\int_{\Omega }|u(x)|^{p_{i}(x)}dx<+\infty ,%
\end{array}%
\end{equation*}%
endowed with the Luxemburg norm%
\begin{equation*}
\begin{array}{l}
\left\Vert u\right\Vert _{p_{i}(x)}=\inf \{\tau >0:\rho _{p_{i}(x)}(\frac{u}{%
\tau })\leq 1\},\text{ }i=1,2.%
\end{array}%
\end{equation*}%
The variable exponent Sobolev space $W_{0}^{1,p_{i}(\cdot )}(\Omega )$ is
defined by%
\begin{equation*}
\begin{array}{l}
W_{0}^{1,p_{i}(x)}(\Omega )=\{u\in L^{p_{i}(x)}(\Omega ):|\nabla u|\in
L^{p_{i}(x)}(\Omega )\}.%
\end{array}%
\end{equation*}%
The norm $\left\Vert u\right\Vert =\left\Vert \nabla u\right\Vert
_{p_{i}(x)} $ makes $W_{0}^{1,p_{i}(x)}(\Omega )$ a Banach space. The
product space $W_{0}^{1,p_{1}(x)}(\Omega )\times W_{0}^{1,p_{2}(x)}(\Omega )$
is endowed with the norm $\left\Vert (u,v)\right\Vert =\left\Vert
u\right\Vert +\left\Vert v\right\Vert .$

In what follows, for any constant $C>0,$ we denote by $\mathcal{B}_{C}$ the
ball in $W_{0}^{1,p_{1}(x)}(\Omega )\times W_{0}^{1,p_{2}(x)}(\Omega )$
defined by%
\begin{equation*}
\mathcal{B}_{C}:=\left\{ (u_{1},u_{2})\in W_{0}^{1,p_{1}(x)}(\Omega )\times
W_{0}^{1,p_{2}(x)}(\Omega ):\,\Vert (u_{1},u_{2})\Vert <C\right\} .
\end{equation*}%
For any $r\in \mathbb{R}$, we denote $r^{+}:=\max \{r,0\}$ and $r^{-}:=\max
\{-r,0\}$.

\mathstrut

Next we formulate a serie of technical Lemmas which will be useful ater on.

\begin{lemma}
\label{L1}$(i)$ For any $u\in L^{p(x)}(\Omega )$ it holds%
\begin{equation*}
\begin{array}{l}
\left\Vert u\right\Vert _{p(x)}^{p^{-}}\leq \rho _{p(x)}(u)\leq \left\Vert
u\right\Vert _{p(x)}^{p^{+}}\text{ \ if \ }\left\Vert u\right\Vert _{p(x)}>1,%
\end{array}%
\end{equation*}%
\begin{equation*}
\begin{array}{l}
\left\Vert u\right\Vert _{p(x)}^{p^{+}}\leq \rho _{p(x)}(u)\leq \left\Vert
u\right\Vert _{p(x)}^{p^{-}}\text{ \ if \ }\left\Vert u\right\Vert
_{p(x)}\leq 1.%
\end{array}%
\end{equation*}

$(ii)$ For $u\in L^{p(x)}(\Omega )\backslash \{0\}$ we have 
\begin{equation}
\left\Vert u\right\Vert _{p(x)}=c\text{ \ if and only if }\rho _{p(x)}\left( 
\frac{u}{c}\right) =1.  \label{normro}
\end{equation}
\end{lemma}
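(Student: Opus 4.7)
The plan is to prove part \textbf{(ii)} first, since part \textbf{(i)} then follows by a short direct estimate. The key analytic ingredient is a careful study of the single-variable map $\varphi(\tau):=\rho_{p(x)}(u/\tau)$ on $(0,\infty)$ for fixed $u\neq 0$.

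First I would show that $\varphi$ is continuous, strictly decreasing, with $\varphi(\tau)\to +\infty$ as $\tau\to 0^{+}$ and $\varphi(\tau)\to 0$ as $\tau\to +\infty$. Continuity follows from the dominated convergence theorem: on a neighborhood of any $\tau_{0}>0$ the integrand $|u(x)/\tau|^{p(x)}$ is bounded by an integrable majorant involving $|u(x)|^{p(x)}$ and the constant $(2/\tau_{0})^{p^{+}}$. Strict monotonicity is immediate, since $u\neq 0$ on a set of positive measure and $p(x)\geq p^{-}>0$ pointwise. The limits at $0$ and $+\infty$ come from monotone and dominated convergence, respectively.

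Granting these properties, set $c=\|u\|_{p(x)}$. By definition of the infimum there exists $\tau_{n}\downarrow c$ with $\varphi(\tau_{n})\leq 1$, so continuity gives $\varphi(c)\leq 1$. Conversely, every $\tau<c$ satisfies $\varphi(\tau)>1$, and letting $\tau\uparrow c$ yields $\varphi(c)\geq 1$. Hence $\rho_{p(x)}(u/c)=1$, establishing \textbf{(ii)}.

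For \textbf{(i)}, I would apply \textbf{(ii)} to write
\begin{equation*}
1=\rho_{p(x)}\!\left(\frac{u}{c}\right)=\int_{\Omega}|u(x)|^{p(x)}\,c^{-p(x)}\,\mathrm{d}x,\qquad c:=\|u\|_{p(x)}.
\end{equation*}
If $c>1$, then $c^{-p^{+}}\leq c^{-p(x)}\leq c^{-p^{-}}$, and inserting this into the identity above yields $c^{-p^{+}}\rho_{p(x)}(u)\leq 1\leq c^{-p^{-}}\rho_{p(x)}(u)$, i.e.\ $c^{p^{-}}\leq \rho_{p(x)}(u)\leq c^{p^{+}}$. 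If $0<c\leq 1$, the pointwise inequalities on $c^{-p(x)}$ reverse, and the same one-line computation gives $c^{p^{+}}\leq \rho_{p(x)}(u)\leq c^{p^{-}}$.

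The only real subtlety is showing that the infimum defining $\|u\|_{p(x)}$ is attained with equality $\rho_{p(x)}(u/c)=1$, which is the content of \textbf{(ii)} and relies on continuity and strict monotonicity of $\varphi$. This is a classical fact in Musielak--Orlicz space theory, so no serious obstacle is expected; the case $u=0$ (excluded in \textbf{(ii)}) has to be handled separately when invoking \textbf{(i)}, but both inequalities there are trivial.
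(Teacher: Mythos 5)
Your argument is correct. The paper itself offers no proof of Lemma \ref{L1}: it is stated as a known fact from the standard theory of variable exponent Lebesgue spaces (cf.\ Fan--Zhao \cite{FZ2} and Diening et al.\ \cite{DHHR}), and your proof is essentially the classical one: establish that $\varphi(\tau)=\rho_{p(x)}(u/\tau)$ is finite for all $\tau>0$ (this uses $p^{+}<\infty$), continuous, strictly decreasing from $+\infty$ to $0$, deduce that the Luxemburg infimum is attained with $\varphi(c)=1$, and then read off part (i) by sandwiching $c^{-p(x)}$ between $c^{-p^{\pm}}$ inside the unit modular identity. Two small remarks: in (ii) you only write out the implication $\|u\|_{p(x)}=c\Rightarrow\rho_{p(x)}(u/c)=1$; the converse is needed for the stated equivalence, but it follows at once from the strict monotonicity of $\varphi$ you already proved (there is a unique $\tau$ with $\varphi(\tau)=1$), so you should say this explicitly. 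Also, for completeness one should note $c>0$, which follows from $\varphi(\tau)\to+\infty$ as $\tau\to 0^{+}$, so that dividing by $c$ is legitimate. With those two sentences added, the proof is complete.
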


\begin{definition}
Let $u,v\in W^{1,p(x)}(\Omega )$. We say that $-\Delta _{p(x)}u\leq -\Delta
_{p(x)}v$ if for all $\varphi \in W_{0}^{1,p(x)}(\Omega )$ with $\varphi
\geq 0$,%
\begin{equation*}
\int_{\Omega }|\nabla u|^{p(x)-2}\nabla u\nabla \varphi \text{ }\mathrm{d}%
x\leq \int_{\Omega }|\nabla v|^{p(x)-2}\nabla v\nabla \varphi \text{ }%
\mathrm{d}x.
\end{equation*}
\end{definition}

\begin{lemma}
\label{L6}Let $u,v\in W^{1,p(x)}(\Omega )$. If $-\Delta _{p(x)}u\leq -\Delta
_{p(x)}v$ and $u\leq v$ on $\partial \Omega $, then $u\leq v$ in $\Omega $.
\end{lemma}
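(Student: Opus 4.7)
The plan is to use the standard duality/test-function argument adapted to the variable exponent setting. I would take $\varphi = (u-v)^{+} \in W_{0}^{1,p(x)}(\Omega)$ as a test function. This is admissible because $(\cdot)^{+}$ maps $W^{1,p(x)}(\Omega)$ to itself (it is a lattice operation in Sobolev spaces with $\nabla(u-v)^{+} = \chi_{\{u>v\}}\nabla(u-v)$ a.e.), and the boundary condition $u \leq v$ on $\partial\Omega$ ensures that the trace of $(u-v)^{+}$ vanishes, so $\varphi \in W_{0}^{1,p(x)}(\Omega)$. Since $\varphi \geq 0$, the inequality $-\Delta_{p(x)}u \leq -\Delta_{p(x)}v$ can be tested against $\varphi$.

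Subtracting the two weak formulations yields
\begin{equation*}
\int_{\{u>v\}} \bigl(|\nabla u|^{p(x)-2}\nabla u - |\nabla v|^{p(x)-2}\nabla v\bigr)\cdot \nabla(u-v)\ \mathrm{d}x \leq 0.
\end{equation*}
The next step invokes the classical pointwise monotonicity inequality for the map $\xi \mapsto |\xi|^{q-2}\xi$ in $\mathbb{R}^{N}$, namely
\begin{equation*}
\bigl(|\xi|^{q-2}\xi - |\eta|^{q-2}\eta\bigr)\cdot (\xi - \eta) \geq 0 \quad \text{for every } q>1,
\end{equation*}
with equality holding if and only if $\xi = \eta$. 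Applying this pointwise with $q=p(x)>1$ and $\xi = \nabla u(x)$, $\eta = \nabla v(x)$, the integrand above is non-negative, so the integral is zero and, by strict monotonicity, $\nabla u = \nabla v$ a.e.\ on $\{u>v\}$. Consequently $\nabla (u-v)^{+} = 0$ a.e.\ in $\Omega$.

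Since $(u-v)^{+} \in W_{0}^{1,p(x)}(\Omega)$ has vanishing gradient, the Poincaré inequality in $W_{0}^{1,p(x)}(\Omega)$ (or a direct component-wise constancy argument combined with the zero trace) forces $(u-v)^{+} \equiv 0$ in $\Omega$, which is exactly $u \leq v$ in $\Omega$.

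There is no real conceptual obstacle; the argument mirrors the constant-exponent case, and the only points requiring attention are verifying that the truncation operation behaves correctly in $W^{1,p(x)}$ (standard for $p^{-}>1$) and that the pointwise monotonicity of $|\xi|^{q-2}\xi$ survives the fact that $q=p(x)$ varies with $x$ — it does, because monotonicity is applied pointwise at each $x$ with the corresponding fixed exponent $p(x)>1$. The variable exponent thus poses no new difficulty for this comparison principle, in contrast to the more delicate role it plays elsewhere in the paper.
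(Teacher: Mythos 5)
Your proof is correct. Note that the paper states Lemma \ref{L6} without any proof, treating it as a known weak comparison principle for the $p(x)$-Laplacian, so there is no in-paper argument to compare yours against; what you give is the standard monotonicity argument, and every step survives in the variable exponent setting. The only points genuinely requiring care are the two you identify: the admissibility of the test function, i.e.\ $(u-v)^{+}\in W_{0}^{1,p(x)}(\Omega)$, which follows from the zero trace on the smooth boundary together with $p\in C^{1}(\overline{\Omega })$ (this regularity gives density of smooth functions and hence the trace characterization of $W_{0}^{1,p(x)}(\Omega)$, and is also how the informal hypothesis ``$u\leq v$ on $\partial \Omega $'' should be read); and the passage from $\nabla (u-v)^{+}=0$ a.e.\ to $(u-v)^{+}\equiv 0$, which uses the Poincar\'{e} inequality valid in $W_{0}^{1,p(x)}(\Omega )$ on a bounded domain. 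The pointwise strict monotonicity of $\xi \mapsto |\xi |^{q-2}\xi $ applied at each fixed $x$ with $q=p(x)>1$ is indeed unaffected by the variability of the exponent, exactly as you observe.
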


The next Lemma is crucial in our approach, which establishes a result of the
Mean Value Theorem type.

\begin{lemma}
\label{L5} Let $h\in L^{p^{\prime }(x)}(\Omega )$ and let $\mathrm{k}\in
L^{\infty }(\Omega )$ be positive functions such that $\mathrm{k}(x)\in
(m,M) $ for a.e. $x\in \Omega $, for constants $m,M>0$. Let $u\in
W_{0}^{1,p(x)}(\Omega )$ be the solution of the Dirichlet problem 
\begin{equation}
-\Delta _{p(x)}u=h\text{ in }\Omega ,\text{ \ }u=0\text{ on }\partial \Omega
.
\end{equation}%
Then, for every $\varphi \in W_{0}^{1,p(x)}(\Omega )$ with $\varphi \geq 0$
in $\Omega $, there exists a constant $\mathrm{\hat{k}}\in (m,M),$ $\mathrm{%
\hat{k}}:=\mathrm{\hat{k}}(\varphi )$, such that 
\begin{equation*}
\int_{\Omega }\mathrm{k}(x){|\nabla u|^{p(x)-2}}\nabla u\nabla \varphi \text{
}dx=\mathrm{\hat{k}}\int_{\Omega }{|\nabla u|^{p(x)-2}}\nabla u\nabla
\varphi \text{ }dx.
\end{equation*}
\end{lemma}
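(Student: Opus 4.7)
The strategy is to cast this as a Lebesgue-integral version of the classical first mean value theorem, with $\mathrm k$ playing the role of the bounded weight and $g(x):=|\nabla u|^{p(x)-2}\nabla u\cdot\nabla\varphi$ that of the ``measure''. First I would verify integrability: since $u\in W_0^{1,p(x)}(\Omega)$ we have $|\nabla u|^{p(x)-1}\in L^{p'(x)}(\Omega)$, so H\"older's inequality in variable-exponent spaces together with Lemma~\ref{L1} gives $g\in L^1(\Omega)$, and $\mathrm k\in L^\infty(\Omega)$ preserves this: $\mathrm kg\in L^1(\Omega)$.

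Next, setting $I:=\int_\Omega g\,dx$ and $J:=\int_\Omega \mathrm k(x)g(x)\,dx$, the weak formulation of $-\Delta_{p(x)}u=h$ tested against $\varphi$ identifies $I=\int_\Omega h\varphi\,dx$. In the generic case $I\neq 0$ I would define
\begin{equation*}
\hat{\mathrm k}:=\frac{J}{I},
\end{equation*}
for which the claimed identity is tautological; the degenerate case $I=0$ forces $J=0$ by an analogous argument, and any $\hat{\mathrm k}\in(m,M)$ works. The remaining task is to certify the strict inclusion $\hat{\mathrm k}\in(m,M)$. Writing
\begin{equation*}
\hat{\mathrm k}-m=\frac{1}{I}\int_\Omega(\mathrm k(x)-m)g(x)\,dx,\qquad M-\hat{\mathrm k}=\frac{1}{I}\int_\Omega(M-\mathrm k(x))g(x)\,dx,
\end{equation*}
and combining the strict pointwise bounds $m<\mathrm k(x)<M$ with a sign decomposition of $g$ on $\Omega$, one forces both right-hand sides to be strictly positive. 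Equivalently, an intermediate value argument applied to the affine map $\lambda\mapsto((1-\lambda)m+\lambda M)\,I$ on $[0,1]$ shows that $J$ is attained at an interior parameter value, producing the desired $\hat{\mathrm k}$.

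The main obstacle is precisely this sign control. Although $\varphi\geq 0$, the quantity $\nabla u\cdot\nabla\varphi$ may change sign, so $g$ need not preserve a fixed sign across $\Omega$. To keep $\hat{\mathrm k}$ strictly inside $(m,M)$ rather than on its closure, I would decompose $\Omega$ into the measurable regions $\{g>0\}$, $\{g=0\}$, $\{g<0\}$ and apply the one-sided mean-value bound on each piece separately before recombining. This localization, leveraging the strict inequalities $m<\mathrm{k}(x)<M$, is where the technical care is concentrated, and it is what distinguishes the statement from a straightforward rescaling of the test-function identity.
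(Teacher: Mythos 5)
There is a genuine gap, and you have in fact located it yourself: the sign of $g(x)=|\nabla u|^{p(x)-2}\nabla u\cdot\nabla\varphi$ is not controlled, and your proposed fix does not repair this. Decomposing $\Omega$ into $\{g>0\}$, $\{g=0\}$, $\{g<0\}$ and applying the one-sided bound on each piece gives inequalities pointing in \emph{opposite} directions on the two nontrivial pieces: on $\{g<0\}$ one has $\int(\mathrm{k}-m)g\,dx\le 0$, and nothing prevents this contribution from outweighing the positive contribution from $\{g>0\}$. Indeed, for an abstract pair $(\mathrm{k},g)$ with $m<\mathrm{k}<M$ and $I=\int_\Omega g\,dx>0$ the ratio $J/I$ can be made arbitrarily large (take $g=\pm1$ on two sets of nearly equal measure and let $\mathrm{k}$ be close to $M$ where $g>0$ and close to $m$ where $g<0$), so the conclusion $\hat{\mathrm{k}}\in(m,M)$ is simply false without further input. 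Your argument uses the equation $-\Delta_{p(x)}u=h$ only to identify $I=\int_\Omega h\varphi\,dx$, which is not enough.

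The missing idea is to exploit the equation and the positivity of $h$ \emph{on the level sets of} $\mathrm{k}$, not on the sign sets of $g$. The paper invokes Banks' layer-cake identities
\begin{equation*}
J=m\,I+\int_m^M\Bigl(\int_{\{\mathrm{k}>y\}}g\,dx\Bigr)dy
=M\,I-\int_m^M\Bigl(\int_{\{\mathrm{k}\le y\}}g\,dx\Bigr)dy,
\end{equation*}
and then shows that each inner integral is positive by testing the equation with $\varphi\cdot\chi_E$ for $E=\{\mathrm{k}>y\}$ or $E=\{\mathrm{k}\le y\}$, which yields $\int_E g\,dx=\int_\Omega h\,\varphi\chi_E\,dx>0$ since $h>0$ and $\varphi\ge 0$. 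This is precisely the positivity statement your decomposition cannot supply, and it is what turns the two displayed identities into the strict two-sided bound $mI<J<MI$, after which the intermediate value argument you describe does apply. To salvage your write-up you would need to replace the sign decomposition of $g$ by this level-set argument (and note that it relies on $h$ being positive, which your proposal never uses).
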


\begin{proof}
From the identity (2) in \cite[Lemma in page 823]{B} we get%
\begin{equation}
\begin{array}{l}
\int_{\Omega }\mathrm{k}(x){|\nabla u|^{p(x)-2}}\nabla u\nabla \varphi \text{
}dx \\ 
=m\int_{\Omega }|\nabla u|^{p(x)-2}\nabla u\nabla \varphi \text{ }%
dx+\int_{m}^{M}\left( \int_{\Omega (y)}|\nabla u|^{p(x)-2}\nabla u\nabla
\varphi \text{ }dx\right) dy%
\end{array}
\label{C1}
\end{equation}%
while the identity (3) (also in \cite[Lemma in page 823]{B}) implies%
\begin{equation}
\begin{array}{l}
\int_{\Omega }\mathrm{k}(x){|\nabla u|^{p(x)-2}}\nabla u\nabla \varphi \text{
}dx \\ 
=M\int_{\Omega }|\nabla u|^{p(x)-2}\nabla u\nabla \varphi \text{ }%
dx-\int_{m}^{M}\left( \int_{\omega (y)}|\nabla u|^{p(x)-2}\nabla u\nabla
\varphi \text{ }dx\right) dy%
\end{array}
\label{C2}
\end{equation}%
where 
\begin{equation*}
\Omega (y)=\left\{ x\in \Omega ;\,\,\mathrm{k}(x)>y\right\} ,\text{ \ }%
\omega (y)=\left\{ x\in \Omega ;\,\,\mathrm{k}(x)\leq y\right\} ,
\end{equation*}%
for $y$ $\in \lbrack m,M].$ Denote by $\chi _{\omega (y)}$ the
characteristic function of the subset $\omega (y).$ Since $\nabla \chi
_{\omega (y)}(x)=0$ in $\Omega ,$ it follows that%
\begin{equation*}
\begin{array}{r}
\int_{\omega (y)}|\nabla u|^{p(x)-2}\nabla u\nabla \varphi \text{ }%
dx=\int_{\Omega }|\nabla u|^{p(x)-2}\left( \nabla u\nabla \varphi \right)
\chi _{\omega (y)}\text{ }dx \\ 
=\int_{\Omega }|\nabla u|^{p(x)-2}\nabla u\nabla \left( \varphi \chi
_{\omega (y)}\right) \text{ }dx.%
\end{array}%
\end{equation*}%
Hence, testing with $\varphi \cdot \chi _{\omega (y)}\in
W_{0}^{1,p(x)}(\Omega )$ we obtain%
\begin{equation*}
\begin{array}{l}
\langle -\Delta _{p(x)}u,\varphi \cdot \chi _{\omega (y)}\rangle \\ 
=\int_{\Omega }|\nabla u|^{p(x)-2}\nabla u\nabla \left( \varphi \cdot \chi
_{\omega (y)}\right) \text{ }dx-\int_{\partial \Omega }|\nabla
u|^{p(x)-2}\left( \nabla u\cdot \vec{n}\right) \left( \varphi \cdot \chi
_{\omega (y)}\right) \text{ }dx \\ 
=\int_{\Omega }|\nabla u|^{p(x)-2}\nabla u\nabla \left( \varphi \cdot \chi
_{\omega (y)}\right) \text{ }dx=\int_{\Omega }h\left( \varphi \cdot \chi
_{\omega (y)}\right) \text{ }dx.%
\end{array}%
\end{equation*}%
Bearing in mind that 
\begin{equation*}
\int_{\Omega }h\varphi \cdot \chi _{\omega (y)}dx>0\text{ for all }\varphi
\geq 0,
\end{equation*}%
we conclude that 
\begin{equation*}
\int_{\Omega }|\nabla u|^{p(x)-2}\nabla u\nabla \left( \varphi \cdot \chi
_{\omega (y)}\right) \text{ }dx>0
\end{equation*}%
which forces%
\begin{equation*}
\int_{\omega (y)}|\nabla u|^{p(x)-2}\nabla u\nabla \varphi \text{ }dx>0.
\end{equation*}%
By a quite similaire argument we get 
\begin{equation*}
\int_{\Omega (y)}|\nabla u|^{p(x)-2}\nabla u\nabla \varphi \text{ }dx>0.
\end{equation*}%
Thus, from (\ref{C1}) et (\ref{C2}) we derive that 
\begin{equation*}
m\int_{\Omega }|\nabla u|^{p(x)-2}\nabla u\nabla \varphi dx\leq \int_{\Omega
}\mathrm{k}(x){|\nabla u|^{p(x)-2}}\nabla u\nabla \varphi dx
\end{equation*}%
and 
\begin{equation*}
\int_{\Omega }\mathrm{k}(x){|\nabla u|^{p(x)-2}}\nabla u\nabla \varphi \text{
}dx\leq M\int_{\Omega }|\nabla u|^{p(x)-2}\nabla u\nabla \varphi \text{ }dx.
\end{equation*}%
Consequently, invoking the intermediate value theorem there exists a
constant $\mathrm{\hat{k}}\in (m,M)$, depending on $\varphi$, such that 
\begin{equation*}
\int_{\Omega }\mathrm{k}(x){|\nabla u|^{p(x)-2}}\nabla u\nabla \varphi \text{
}dx=\mathrm{\hat{k}}\int_{\Omega }{|\nabla u|^{p(x)-2}}\nabla u\nabla
\varphi \text{ }dx.
\end{equation*}%
This ends the proof.
\end{proof}

\begin{corollary}
\label{C}Let $h\in L^{\infty }(\Omega )$ a positive function in $\Omega $
and let $u\in W_{0}^{1,p(x)}(\Omega )$ be the solution of the Dirichlet
problem 
\begin{equation}
-\Delta _{p(x)}u=h(x)\text{ in }\Omega ,\text{ \ }u=0\text{ on }\partial
\Omega .
\end{equation}%
Then, for every $\varphi \in W_{0}^{1,p(x)}(\Omega )$ with $\varphi \geq 0$
in $\Omega ,$ there exists $x_{0}\in \Omega ,$ depending on $\varphi $, such
that 
\begin{equation*}
\int_{\Omega }C^{p(x)-1}{|\nabla u|^{p(x)-2}}\nabla u\nabla \varphi \text{ }%
dx=C^{p(x_{0})-1}\int_{\Omega }{|\nabla u|^{p(x)-2}}\nabla u\nabla \varphi 
\text{ }dx,
\end{equation*}%
for every constant $C>0.$
\end{corollary}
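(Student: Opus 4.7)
The approach is to specialize Lemma \ref{L5} to the weight $\mathrm{k}(x)=C^{p(x)-1}$ and then invoke the intermediate value theorem on $p(\cdot)$ to realise the resulting scalar constant as a value $C^{p(x_{0})-1}$.

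First I would verify the hypotheses of Lemma \ref{L5}. Since $\Omega $ is bounded, $L^{\infty}(\Omega )\hookrightarrow L^{p^{\prime }(x)}(\Omega )$, so the datum $h$ qualifies. The regularity $p\in C^{1}(\overline{\Omega })$ together with $1<p^{-}\leq p^{+}<\infty $ makes $\mathrm{k}(x)=C^{p(x)-1}$ continuous and strictly positive on $\overline{\Omega }$; hence $\mathrm{k}\in L^{\infty }(\Omega )$ and one may choose $m,M>0$ with
\[
0<m<\min_{\overline{\Omega }}C^{p(x)-1}\leq \max_{\overline{\Omega }}C^{p(x)-1}<M,
\]
so that $\mathrm{k}(x)\in (m,M)$ for every $x\in \Omega $. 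The trivial case $C=1$ is handled separately by taking any $x_{0}\in \Omega $.

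Next, I would apply Lemma \ref{L5} with this choice of $\mathrm{k}$. For each nonnegative test function $\varphi \in W_{0}^{1,p(x)}(\Omega )$ the lemma produces a constant $\hat{\mathrm{k}}\in (m,M)$, depending on $\varphi $ (and implicitly on $C$), such that
\[
\int_{\Omega }C^{p(x)-1}|\nabla u|^{p(x)-2}\nabla u\nabla \varphi \,dx=\hat{\mathrm{k}}\int_{\Omega }|\nabla u|^{p(x)-2}\nabla u\nabla \varphi \,dx.
\]
A look at the derivation of Lemma \ref{L5} shows that $\hat{\mathrm{k}}$ is actually squeezed between $\min_{\overline{\Omega }}\mathrm{k}$ and $\max_{\overline{\Omega }}\mathrm{k}$, not merely in the slack interval $(m,M)$, which is what I will exploit in the last step.

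Finally, I would convert $\hat{\mathrm{k}}$ into a point of $\Omega $. Because $\Omega $ is connected and $x\mapsto C^{p(x)-1}$ is continuous on $\overline{\Omega }$, the intermediate value theorem supplies some $x_{0}\in \Omega $ with $C^{p(x_{0})-1}=\hat{\mathrm{k}}$; substitution gives the stated identity. I expect no genuine obstacle: the only subtlety is that Lemma \ref{L5} is phrased with the open bracket $(m,M)$, while the range of $\mathrm{k}$ is a priori closed, and this is resolved by the strict inequalities built into the choice of $m,M$ above together with the sharper localisation of $\hat{\mathrm{k}}$ noted in the previous paragraph.
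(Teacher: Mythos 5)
Your proof is correct and follows exactly the route the paper intends (the corollary is stated without proof, but it is plainly meant to be Lemma \ref{L5} applied to $\mathrm{k}(x)=C^{p(x)-1}$ followed by the intermediate value theorem for the continuous function $x\mapsto C^{p(x)-1}$ on the connected domain $\Omega$). Your extra care in noting that $\hat{\mathrm{k}}$ is in fact localised within the range of $\mathrm{k}$ over $\Omega$, rather than merely in the slack interval $(m,M)$, correctly fills the one point the paper leaves implicit.
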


\begin{lemma}
\cite{R}\label{L4}Let $w_{1}\geq 0$ and $w_{2}>0$ be two nonconstant
differentiable functions in $\Omega .$ For all $x\in \Omega $ define 
\begin{equation}
\begin{array}{ll}
\mathcal{L}_{1}(w_{1},w_{2})= & |\nabla w_{1}|^{p(x)}+(p(x)-1)|\nabla
w_{2}|^{p(x)}(\frac{w_{1}}{w_{2}})^{p(x)} \\ 
& -p(x)|\nabla w_{2}|^{p(x)-2}\nabla w_{2}\nabla w_{1}(\frac{w_{1}}{w_{2}}%
)^{p(x)-1},%
\end{array}
\label{70}
\end{equation}%
\begin{equation}
\begin{array}{ll}
\mathcal{L}_{2}(w_{1},w_{2})= & |\nabla w_{1}|^{p(x)}-p(x)|\nabla
w_{2}|^{p(x)-2}\nabla w_{2}\nabla (\frac{w_{1}^{p(x)}}{w_{2}^{p(x)-1}}).%
\end{array}%
\end{equation}%
Then $\mathcal{L}_{1}(w_{1},w_{2})=\mathcal{L}_{2}(w_{1},w_{2})\geq 0$.
\end{lemma}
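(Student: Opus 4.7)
The plan is to split the statement into two independent claims: the algebraic identity $\mathcal{L}_1(w_1,w_2)=\mathcal{L}_2(w_1,w_2)$ and the pointwise nonnegativity $\mathcal{L}_1(w_1,w_2)\ge 0$. Both assertions are pointwise in $x$, so I would fix $x\in\Omega$ and treat $p=p(x)$ as a fixed number throughout the computation.

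For the identity, I would expand $\nabla\!\left(w_{1}^{p(x)}/w_{2}^{p(x)-1}\right)$ directly. Applying the product and chain rules (with $p(x)$ held fixed in the differentiation, as is standard in the Picone-type identities for the $p(x)$-Laplacian) gives
\[
\nabla\!\left(\frac{w_{1}^{p(x)}}{w_{2}^{p(x)-1}}\right)=p(x)\left(\frac{w_{1}}{w_{2}}\right)^{p(x)-1}\!\nabla w_{1}-(p(x)-1)\left(\frac{w_{1}}{w_{2}}\right)^{p(x)}\!\nabla w_{2}.
\]
Taking the inner product with $|\nabla w_{2}|^{p(x)-2}\nabla w_{2}$ and substituting into the defining expression of $\mathcal{L}_{2}$ should reproduce exactly the cross term and the $(p(x)-1)(w_1/w_2)^{p(x)}|\nabla w_2|^{p(x)}$ term appearing in $\mathcal{L}_{1}$, so that the two expressions agree algebraically.

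For the nonnegativity, I would apply Young's inequality pointwise with conjugate exponents $p(x)$ and $p'(x)=p(x)/(p(x)-1)$. Choosing $a=|\nabla w_{1}|$ and $b=(w_{1}/w_{2})^{p(x)-1}|\nabla w_{2}|^{p(x)-1}$ yields
\[
p(x)\,ab\;\le\;|\nabla w_{1}|^{p(x)}+(p(x)-1)\left(\frac{w_{1}}{w_{2}}\right)^{p(x)}|\nabla w_{2}|^{p(x)}.
\]
Cauchy--Schwarz gives $|\nabla w_{2}|^{p(x)-2}\nabla w_{2}\cdot\nabla w_{1}\le|\nabla w_{2}|^{p(x)-1}|\nabla w_{1}|$, so the subtracted quantity $p(x)(w_{1}/w_{2})^{p(x)-1}|\nabla w_{2}|^{p(x)-2}\nabla w_{2}\cdot\nabla w_{1}$ is at most $p(x)\,ab$, which in turn is dominated by the right-hand side above. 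Rearranging produces $\mathcal{L}_{1}\ge 0$.

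The delicate point is really the identity $\mathcal{L}_{1}=\mathcal{L}_{2}$ in the genuinely variable-exponent regime: a literal application of the chain rule to $w_{1}^{p(x)}/w_{2}^{p(x)-1}$ generates additional terms involving $\nabla p(x)$, $\ln w_{1}$ and $\ln w_{2}$, and one has to verify that these contributions cancel (or are suppressed by the convention used in the reference \cite{R}) in order to recover precisely the three terms of $\mathcal{L}_{1}$. Apart from this bookkeeping, the rest of the argument is a routine pointwise combination of Young's and Cauchy--Schwarz inequalities.
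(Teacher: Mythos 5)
The paper supplies no proof of this lemma: it is cited verbatim from \cite{R}, so there is no internal argument for you to match. What follows therefore evaluates your proposal on its own merits.

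Your treatment of the nonnegativity $\mathcal{L}_{1}\geq 0$ is correct and is the standard Allegretto--Huang style argument: Cauchy--Schwarz on the cross term followed by Young's inequality with exponents $p(x)$ and $p(x)/(p(x)-1)$ applied pointwise. Nothing to add there.

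The gap is in the identity $\mathcal{L}_{1}=\mathcal{L}_{2}$, and it is a computational one you could have caught by actually substituting. Under your ``freeze $p(x)$'' convention you expand
\begin{equation*}
\nabla\!\left(\frac{w_{1}^{p(x)}}{w_{2}^{p(x)-1}}\right)
= p(x)\!\left(\frac{w_{1}}{w_{2}}\right)^{p(x)-1}\!\nabla w_{1}
-(p(x)-1)\!\left(\frac{w_{1}}{w_{2}}\right)^{p(x)}\!\nabla w_{2},
\end{equation*}
but the definition of $\mathcal{L}_{2}$ carries a $p(x)$ prefactor in front of $|\nabla w_{2}|^{p(x)-2}\nabla w_{2}$. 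Plugging in gives
\begin{equation*}
\mathcal{L}_{2}
=|\nabla w_{1}|^{p(x)}
- p(x)^{2}\!\left(\frac{w_{1}}{w_{2}}\right)^{p(x)-1}\!|\nabla w_{2}|^{p(x)-2}\nabla w_{2}\!\cdot\!\nabla w_{1}
+ p(x)\bigl(p(x)-1\bigr)\!\left(\frac{w_{1}}{w_{2}}\right)^{p(x)}\!|\nabla w_{2}|^{p(x)},
\end{equation*}
which has coefficients $p(x)^{2}$ and $p(x)(p(x)-1)$ where $\mathcal{L}_{1}$ has $p(x)$ and $p(x)-1$. These agree only when $p(x)\equiv 1$. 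So the freezing convention does not ``reproduce exactly'' the terms of $\mathcal{L}_{1}$ as you claim; the asserted cancellation simply fails. The correct constant-exponent Picone identity has no such prefactor, and one checks immediately that dropping it makes your frozen-$p$ computation close. Conversely, if the honest variable-exponent gradient is used (as it must be when the quotient is treated as a test function, e.g.\ in the proof of Lemma~\ref{L9}), the chain rule produces an extra term $\frac{w_{1}^{p(x)}}{w_{2}^{p(x)-1}}\ln(w_{1}/w_{2})\,\nabla p(x)$ that does not cancel, and the version in \cite{R} must carry an explicit compensating $\nabla p$-correction. In short: your heuristic that the bookkeeping works out ``by the convention in \cite{R}'' is not a proof; the identity as transcribed in the statement is not a pointwise algebraic identity for genuinely variable $p$, and you should have detected the mismatch by carrying the substitution to the end.
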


\begin{lemma}
\label{L9}Assume $(\mathrm{H}_{p})$\textrm{\ }holds true and let 
\begin{equation}
0<J<\lambda _{1,p}(p^{-}-1).  \label{3}
\end{equation}%
Then, the Dirichlet problem 
\begin{equation}
\left\{ 
\begin{array}{ll}
-\Delta _{p(x)}u=J(\frac{u^{+}}{\max \{1,\Vert u\Vert \}})^{p(x)-1}+\delta
\lambda _{1,p}\phi _{1,p}^{p(x)-1} & \text{in }\Omega \\ 
u=0\text{ } & \text{on }\partial \Omega%
\end{array}%
\right.  \label{23}
\end{equation}%
does not admit solutions $u\in W_{0}^{1,p(x)}(\Omega )$ for every $\delta >0$
small.
\end{lemma}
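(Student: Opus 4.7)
I would argue by contradiction: suppose that \eqref{23} admits a solution $u_\delta \in W_0^{1,p(x)}(\Omega)$ for some sequence $\delta_n \to 0^+$, and derive an obstruction that forces $\delta$ to be bounded below away from $0$.

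\textbf{Step 1 (Positivity).} The right-hand side of \eqref{23} is pointwise non-negative and strictly positive a.e.\ thanks to the term $\delta \lambda_{1,p}\phi_{1,p}^{p(x)-1} > 0$ in $\Omega$ (cf.\ \eqref{27}). Applying Lemma \ref{L6} to the pair $(0,u_\delta)$ gives $u_\delta \geq 0$ in $\Omega$, so $(u_\delta)^+ = u_\delta$ and, with $K_\delta := \max\{1,\Vert u_\delta\Vert\}\geq 1$, problem \eqref{23} reads
\[
-\Delta_{p(x)} u_\delta = J K_\delta^{1-p(x)} u_\delta^{p(x)-1} + \delta\lambda_{1,p}\phi_{1,p}^{p(x)-1}.
\]

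\textbf{Step 2 (Picone + test function).} I would apply the Picone inequality $\mathcal{L}_2(\phi_{1,p},u_\delta)\geq 0$ of Lemma \ref{L4}, integrate over $\Omega$, and test \eqref{23} against $\phi_{1,p}^{p(x)}/u_\delta^{p(x)-1}$ (admissible after a standard truncation near $\partial\Omega$, using \eqref{27}). Since testing the eigenvalue equation $-\Delta_{p(x)}\phi_{1,p} = \lambda_{1,p}\phi_{1,p}^{p(x)-1}$ against $\phi_{1,p}$ yields $\int_\Omega |\nabla \phi_{1,p}|^{p(x)}\,dx = \lambda_{1,p}\int_\Omega \phi_{1,p}^{p(x)}\,dx$, I arrive at
\[
\lambda_{1,p}\int_\Omega \phi_{1,p}^{p(x)}\,dx \;\geq\; J\int_\Omega K_\delta^{1-p(x)}\phi_{1,p}^{p(x)}\,dx \;+\; \delta\lambda_{1,p}\int_\Omega \frac{\phi_{1,p}^{2p(x)-1}}{u_\delta^{p(x)-1}}\,dx.
\]

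\textbf{Step 3 (Mean value reduction and combination).} To neutralise the variable-exponent factor $K_\delta^{1-p(x)}$ I would invoke Corollary \ref{C} (and the intermediate value theorem underlying Lemma \ref{L5}) to pull it out as a constant $K_\delta^{1-\overline p}$ for some $\overline p\in(p^-,p^+)$. Because $K_\delta\geq 1$ and $\overline p>p^->1$, one has $K_\delta^{1-\overline p}\leq 1$; the inequality of Step 2 then becomes
\[
\bigl(\lambda_{1,p} - JK_\delta^{1-\overline p}\bigr)\int_\Omega \phi_{1,p}^{p(x)}\,dx \;\geq\; \delta\lambda_{1,p}\int_\Omega \frac{\phi_{1,p}^{2p(x)-1}}{u_\delta^{p(x)-1}}\,dx \;>\; 0.
\]
Pairing this with the energy estimate obtained by testing \eqref{23} against $u_\delta$, combined with the eigenvalue characterisation \eqref{71} and a second application of the mean value reduction, yields
\[
\bigl(\lambda_{1,p} - JK_\delta^{1-\widehat p}\bigr)\int_\Omega u_\delta^{p(x)}\,dx \;\leq\; \delta\lambda_{1,p}\int_\Omega \phi_{1,p}^{p(x)-1}u_\delta\,dx,\qquad \widehat p\in(p^-,p^+).
\]
Exploiting the hypothesis $J<\lambda_{1,p}(p^--1)$ one pins down the coefficients $\lambda_{1,p}-JK_\delta^{1-\overline p}$ and $\lambda_{1,p}-JK_\delta^{1-\widehat p}$ so that as $\delta_n\to 0^+$ the norms $\Vert u_{\delta_n}\Vert$ are squeezed to $0$; testing \eqref{23} finally against $\phi_{1,p}$ produces a left-hand side that vanishes while the right-hand side remains bounded below by $\delta_n \lambda_{1,p}\int_\Omega \phi_{1,p}^{p(x)}\,dx$ relative to the decay rate of the left-hand side, contradicting equality for $n$ large.

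\textbf{Main obstacle.} The delicate point is to justify rigorously that $\phi_{1,p}^{p(x)}/u_\delta^{p(x)-1}$ is an admissible test function in $W_0^{1,p(x)}(\Omega)$. Because of the variable exponent, the chain rule for $\nabla(\phi_{1,p}^{p(x)}/u_\delta^{p(x)-1})$ generates extra $\nabla p(x)$ terms absent in the constant-exponent case, and the expression must be truncated near $\partial\Omega$ to avoid the vanishing of $u_\delta$. Lemma \ref{L5} and Corollary \ref{C} are tailored exactly for this purpose: they let one replace the variable coefficients appearing in these integrals by an averaged constant in $(p^-,p^+)$, thereby restoring a workable comparison with $\lambda_{1,p}$ despite the loss of homogeneity of $-\Delta_{p(x)}$.
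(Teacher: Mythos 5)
There is a genuine gap here, and it comes from applying Picone's identity in the reversed direction and then losing the factor that makes the hypothesis \eqref{3} bite. The paper's proof takes $\mathcal{L}_2(u,\phi_{1,p}+\varepsilon)\geq 0$, i.e.\ it tests the \emph{eigenvalue} equation with $u^{p(x)}/(\phi_{1,p}+\varepsilon)^{p(x)-1}$ (admissible for free, since $\phi_{1,p}+\varepsilon\geq\varepsilon$ and $u\in C^{1,\sigma}(\overline{\Omega})$ by regularity), pulls the Picone factor $p(x)$ out as a constant $\mathrm{\bar k}\geq p^-$ via Lemma \ref{L5}, and lets $\varepsilon\to 0$ to obtain $\int_\Omega|\nabla u|^{p(x)}\,\mathrm{d}x\geq\lambda_{1,p}\,p^-\int_\Omega u^{p(x)}\,\mathrm{d}x$. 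Testing \eqref{23} with $u$ and using the strong maximum principle bound $u\geq\delta\phi_{1,p}$ gives $\int_\Omega|\nabla u|^{p(x)}\,\mathrm{d}x\leq(J+\lambda_{1,p})\int_\Omega u^{p(x)}\,\mathrm{d}x$; the two force $J\geq\lambda_{1,p}(p^--1)$, a contradiction for each fixed small $\delta$, with no limiting process in $\delta$.

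You instead take $\mathcal{L}_2(\phi_{1,p},u_\delta)\geq 0$ and test \eqref{23} with $\phi_{1,p}^{p(x)}/u_\delta^{p(x)-1}$. Two things go wrong. First, your Step~2 inequality silently replaces the factor $p(x)\geq p^-$ produced by $\mathcal{L}_2$ by $1$; that factor is exactly where $p^--1$ enters the argument, and without it the threshold $\lambda_{1,p}(p^--1)$ is unreachable. Even restoring it, your orientation only yields an inequality of the form $\lambda_{1,p}\int_\Omega\phi_{1,p}^{p(x)}\,\mathrm{d}x\geq p^-\bigl(J\int_\Omega K_\delta^{1-p(x)}\phi_{1,p}^{p(x)}\,\mathrm{d}x+\cdots\bigr)$, which is perfectly compatible with $J<\lambda_{1,p}(p^--1)$ (e.g.\ for $p^-$ close to $1$), so no contradiction is in sight. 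Second, the closing limit argument is not a proof: in the identity obtained by testing with $\phi_{1,p}$, every term tends to $0$ as $\delta_n\to 0^+$ and you supply no comparison of rates; moreover the coefficient $\lambda_{1,p}-JK_\delta^{1-\hat p}$ on which the squeeze relies can be negative under the lemma's hypothesis, which permits $J$ up to $\lambda_{1,p}(p^--1)>\lambda_{1,p}$ when $p^->2$. The admissibility issue you flag for $\phi_{1,p}^{p(x)}/u_\delta^{p(x)-1}$ is real but secondary (it could be handled via $u_\delta\geq\delta\phi_{1,p}$, or avoided altogether, as the paper does, by swapping the roles of $u$ and $\phi_{1,p}$ and shifting by $\varepsilon$); the structural problem is the orientation of Picone and the lost factor $p^-$.
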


\begin{proof}
Arguing by contradiction, let $u\in W_{0}^{1,p(x)}(\Omega )$ be a solution
of (\ref{23}). According to \cite[Theorem $4.1$]{FZ}, $u$ is bounded in $%
L^{\infty }(\Omega )$ and therefore, owing to \cite[Theorem 1.1]{F}, $u$ is
bounded in $C^{1,\sigma }(\overline{\Omega })$ for a certain $\sigma \in
(0,1)$. Furthermore, by strong maximum principle in \cite{FZZ2} one can write%
\begin{equation}
u\geq \delta \phi _{1,p(x)}\text{ \ in }\Omega ,\text{ for }\delta >0\text{
small.}  \label{1}
\end{equation}%
Applying Picone's Identity in Lemma \ref{L4} to functions $u$ and $\phi
_{1,p}+\varepsilon $ for $\varepsilon >0,$ and by Lemma \ref{L5}, there is 
\textrm{\={k}}$\in (p^{-},p^{+})$ such that%
\begin{equation*}
\begin{array}{l}
0\leq \int_{\Omega }\mathcal{L}_{2}(u,\phi _{1,p}+\varepsilon )\text{ }%
\mathrm{d}x \\ 
=\int_{\Omega }|\nabla u|^{p(x)}\text{ }\mathrm{d}x-\int_{\Omega
}p(x)|\nabla \phi _{1,p}|^{p(x)-2}\nabla \phi _{1,p}\nabla (\frac{u^{p(x)}}{%
(\phi _{1,p}+\varepsilon )^{p(x)-1}})\text{ }\mathrm{d}x \\ 
=\int_{\Omega }|\nabla u|^{p(x)}\text{ }\mathrm{d}x-\mathrm{\bar{k}}%
\int_{\Omega }|\nabla \phi _{1,p}|^{p(x)-2}\nabla \phi _{1,p}\nabla (\frac{%
u^{p(x)}}{(\phi _{1,p}+\varepsilon )^{p(x)-1}})\text{ }\mathrm{d}x \\ 
=\int_{\Omega }|\nabla u|^{p(x)}\text{ }\mathrm{d}x-\lambda _{1,p}\mathrm{%
\bar{k}}\int_{\Omega }\left( \frac{\phi _{1,p}}{\phi _{1,p}+\varepsilon }%
\right) ^{p(x)-1}u^{p(x)}\text{ }\mathrm{d}x \\ 
\leq \int_{\Omega }|\nabla u|^{p(x)}\text{ }\mathrm{d}x-\lambda
_{1,p}p^{-}\int_{\Omega }\left( \frac{\phi _{1,p}}{\phi _{1,p}+\varepsilon }%
\right) ^{p(x)-1}u^{p(x)}\text{ }\mathrm{d}x\mathrm{.}%
\end{array}%
\end{equation*}%
Passing to the limit as $\varepsilon \rightarrow 0,$ by means of the
Lebesgue dominated convergence theorem, we obtain%
\begin{equation}
\begin{array}{l}
0\leq \int_{\Omega }|\nabla u|^{p(x)}\text{ }\mathrm{d}x-\lambda
_{1,p}p^{-}\int_{\Omega }u^{p(x)}\text{ }\mathrm{d}x.%
\end{array}
\label{56}
\end{equation}%
Acting with $u$ in (\ref{23}) and using (\ref{1}) lead to%
\begin{equation}
\begin{array}{l}
\int_{\Omega }|\nabla u|^{p(x)}\text{ }\mathrm{d}x=\int_{\Omega }(\frac{%
Ju^{p(x)}}{(\max \{1,\Vert u\Vert \})^{p(x)-1}}+\delta \lambda _{1,p}\phi
_{1,p}^{p(x)-1}u)\text{ }\mathrm{d}x \\ 
\leq \int_{\Omega }(Ju^{p(x)}+(\delta \lambda _{1,p}\phi _{1,p})^{p(x)-1}u)%
\text{ }\mathrm{d}x\leq \int_{\Omega }(J+\lambda _{1,p})u^{p(x)}\text{ }%
\mathrm{d}x.%
\end{array}
\label{57}
\end{equation}%
Gathering (\ref{56})-(\ref{57}) together we get%
\begin{equation*}
\begin{array}{l}
0\leq (J-\lambda _{1,p}(p^{-}-1))\int_{\Omega }u^{p(x)}\text{ }\mathrm{d}x<0,%
\end{array}%
\end{equation*}%
a contradiction due to (\ref{3}). Consequently, problem (\ref{23}) has no
solutions.
\end{proof}

\section{Proof of Theorem \protect\ref{T1}: Opposit constant-sign solutions}

\label{S4}

We establish the existence of two opposite constant-sign solutions to system 
$(\mathrm{P})$. Our approach relies on sub-supersolutions method (see \cite[%
Theorem 3.1]{AMT}). We recall that a sub-supersolution for $(\mathrm{P})$
consists of two pairs $(\underline{u}_{1},\overline{u}_{1}),(\underline{u}%
_{2},\overline{u}_{2})\in W_{0}^{1,p_{1}(x)}(\Omega )\times
W_{0}^{1,p_{2}(x)}(\Omega )$ such that there hold $\overline{u}_{i}\geq 
\underline{u}_{i}$ in $\Omega $, and%
\begin{equation*}
\int_{\Omega }\left\vert \nabla \underline{u}_{i}\right\vert
^{p_{i}(x)-2}\nabla \underline{u}_{i}\nabla \varphi _{i}\ \mathrm{d}%
x-\int_{\Omega }f_{i}(x,u_{1},u_{2})\varphi _{i}\ \mathrm{d}x\leq 0,
\end{equation*}%
\begin{equation*}
\int_{\Omega }\left\vert \nabla \overline{u}_{i}\right\vert
^{p_{i}(x)-2}\nabla \overline{u}_{i}\nabla \varphi _{i}\ \mathrm{d}%
x-\int_{\Omega }f_{i}(x,u_{1},u_{2})\varphi _{i}\ \mathrm{d}x\geq 0,
\end{equation*}%
for all $\varphi _{i}\in W_{0}^{1,p_{i}(x)}\left( \Omega \right) $ with $%
\varphi _{i}\geq 0$ a.e. in $\Omega $ and for all $u_{i}\in
W_{0}^{1,p_{i}(x)}\left( \Omega \right) $ satisfying $u_{i}\in \lbrack 
\underline{u}_{i},\overline{u}_{i}]$ a.e. in $\Omega $, for $i=1,2$.

\mathstrut

\noindent \textbf{Existence of supersolution:} \newline

Let $\tilde{\Omega}$ be a bounded domain in $%
\mathbb{R}
^{N}$ with smooth boundary $\partial \tilde{\Omega}$, such that $\overline{%
\Omega }\subset \tilde{\Omega}.$ We denote by $\tilde{\lambda}_{1,p_{i}}$
the first eigenvalue of $-\Delta _{p_{i}(x)}$ on $W_{0}^{1,p_{i}(x)}(\tilde{%
\Omega})$ and by $\tilde{\phi}_{1,p_{i}}$ the positive eigenfunction
corresponding to $\tilde{\lambda}_{1,p_{i}}$, that is%
\begin{equation}
-\Delta _{p_{i}(x)}\tilde{\phi}_{1,p_{i}}=\tilde{\lambda}_{1,p_{i}}\tilde{%
\phi}_{1,p_{i}}^{p_{i}(x)-1}\ \text{in }\tilde{\Omega},\ \ \tilde{\phi}%
_{1,p_{i}}=0\text{ on }\partial \tilde{\Omega}.  \label{29}
\end{equation}%
By the definition of $\tilde{\Omega}$ and the strong maximum principle,
there exists a constant $\tau >0$ sufficiently small such that 
\begin{equation}
\tilde{\phi}_{1,p_{i}}\left( x\right) >\tau \text{ in }\overline{\Omega }.
\label{28}
\end{equation}%
Moreover, one can find a constant $\bar{\eta}>0$ such that%
\begin{equation}
\bar{\eta}<\min_{i=1,2}\left\{ \frac{\tilde{\lambda}_{1,p_{i}}}{2}\tau
^{p_{i}^{+}-1}\left\Vert \tilde{\phi}_{1,p_{i}}\right\Vert _{\infty
}^{-(p_{i}^{-}-1)}\right\} .  \label{67}
\end{equation}%
For a constant $\varepsilon \in (0,1)$ set%
\begin{equation}
(\overline{u}_{1},\overline{u}_{2})=\varepsilon ^{-1}(\tilde{\phi}_{1,p_{1}},%
\tilde{\phi}_{1,p_{2}}).  \label{sub1}
\end{equation}%
It follows that%
\begin{equation}
\begin{array}{l}
\int_{\Omega }|\nabla \overline{u}_{i}|^{p_{i}(x)-2}\nabla \overline{u}%
_{i}\nabla \varphi _{i}\text{ }\mathrm{d}x=\int_{\Omega }\varepsilon
^{-(p_{i}(x)-1)}|\nabla \tilde{\phi}_{1,p_{i}}|^{p_{i}(x)-2}\nabla \tilde{%
\phi}_{1,p_{i}}\nabla \varphi _{i}\text{ }\mathrm{d}x.%
\end{array}
\label{39}
\end{equation}%
Using (\ref{28}) and Corollary \ref{C}, there is $\bar{x}_{i}\in \Omega $
such that%
\begin{equation}
\begin{array}{l}
\int_{\Omega }\varepsilon ^{-(p_{i}(x)-1)}|\nabla \tilde{\phi}%
_{1,p_{i}}|^{p_{i}(x)-2}\nabla \tilde{\phi}_{1,p_{i}}\nabla \varphi _{i}%
\text{ }\mathrm{d}x \\ 
=\varepsilon ^{-(p_{i}(\bar{x}_{i})-1)}\tilde{\lambda}_{1,p_{i}}\int_{\Omega
}\tilde{\phi}_{1,p_{i}}^{p_{i}(x)-1}\varphi _{i}\text{ }\mathrm{d}x \\ 
\geq \varepsilon ^{-(p_{i}^{-}-1)}\tilde{\lambda}_{1,p_{i}}\int_{\Omega }%
\tilde{\phi}_{1,p_{i}}^{p_{i}(x)-1}\varphi _{i}\text{ }\mathrm{d}x \\ 
=\varepsilon ^{-(p_{i}^{-}-1)}\tilde{\lambda}_{1,p_{i}}\int_{\Omega }\frac{1%
}{2}(\tilde{\phi}_{1,p_{i}}^{p_{i}(x)-1}+\tilde{\phi}%
_{1,p_{i}}^{p_{i}(x)-1})\varphi _{i}\text{ }\mathrm{d}x \\ 
\geq \varepsilon ^{-(p_{i}^{-}-1)}\tilde{\lambda}_{1,p_{i}}\int_{\Omega }%
\frac{1}{2}(\tau ^{p_{i}^{+}-1}+\tilde{\phi}_{1,p_{i}}^{p_{i}(x)-1})\varphi
_{i}\text{ }\mathrm{d}x,%
\end{array}
\label{31}
\end{equation}%
provided $\varepsilon >0$ small enough. Since, from (\ref{67}), we have%
\begin{equation*}
\frac{1}{2}\tilde{\lambda}_{1,p_{i}}\tilde{\phi}_{1,p_{i}}^{p_{i}(x)-1}\geq 
\frac{1}{2}\tilde{\lambda}_{1,p_{i}}\left\{ 
\begin{array}{ll}
\tilde{\phi}_{1,p_{i}}^{p_{i}^{-}-1}(x) & \text{if }\tilde{\phi}%
_{1,p_{i}}(x)\geq 1 \\ 
\tilde{\phi}_{1,p_{i}}^{p_{i}^{+}-1}(x) & \text{if }\tilde{\phi}%
_{1,p_{i}}(x)<1%
\end{array}%
\right. \geq \bar{\eta}\tilde{\phi}_{1,p_{i}}^{p_{i}^{-}-1}(x)\text{ in }%
\Omega ,
\end{equation*}%
then it follows that%
\begin{equation}
\begin{array}{l}
\int_{\Omega }\varepsilon ^{-(p_{i}^{-}-1)}\frac{1}{2}\tilde{\lambda}%
_{1,p_{i}}\tilde{\phi}_{1,p_{i}}^{p_{i}(x)-1}\varphi _{i}\text{ }\mathrm{d}%
x\geq \int_{\Omega }\varepsilon ^{-(p_{i}^{-}-1)}\bar{\eta}\tilde{\phi}%
_{1,p_{i}}^{p_{i}^{-}-1}\varphi _{i}\text{ }\mathrm{d}x \\ 
=\int_{\Omega }\bar{\eta}(\varepsilon ^{-1}\tilde{\phi}%
_{1,p_{i}})^{p_{i}^{-}-1}\varphi _{i}\text{ }\mathrm{d}x=\int_{\Omega }\bar{%
\eta}\overline{u}_{i}^{p_{i}^{-}-1}\varphi _{i}\text{ }\mathrm{d}x,%
\end{array}
\label{30}
\end{equation}%
for all $\varphi _{i}\in W_{0}^{1,p_{i}(x)}(\Omega )$ with $\varphi _{i}\geq
0$. On the other hand, assumption $(\mathrm{H.3})$ yields $\rho =\rho (\bar{%
\eta})>0$ such that 
\begin{equation*}
\frac{f_{i}(x,s_{1},s_{2})}{|s_{i}|^{p_{i}^{-}-2}s_{i}}\leq \bar{\eta},\text{
for a.e. }x\in \Omega \text{, for all }|s_{i}|>\rho ,s_{j}\in 
\mathbb{R}
,
\end{equation*}%
while assumption $(\mathrm{H.1})$ ensures the existence of a constant $%
c_{\rho }>0$ for which we have%
\begin{equation*}
|f_{i}(x,s_{1},s_{2})|\leq c_{\rho }\text{, for a.e. }x\in \Omega \text{,
for all }|s_{1}|,|s_{2}|\leq \rho ,i=1,2.
\end{equation*}%
Thus, it turns out that%
\begin{equation}
\begin{array}{l}
|f_{i}(x,s_{1},s_{2})|\leq c_{\rho }+\bar{\eta}|s_{i}|^{p_{i}^{-}-1},\text{\
for a.e. }x\in \Omega \text{, for all }s_{i}\in 
\mathbb{R}
.%
\end{array}
\label{32}
\end{equation}%
For $\varepsilon $ small one may assume that%
\begin{equation}
\begin{array}{l}
\varepsilon ^{-(p_{i}^{-}-1)}\frac{1}{2}\tilde{\lambda}_{1,p_{i}}\tau
^{p_{i}^{+}-1}\geq c_{\rho }.%
\end{array}
\label{45}
\end{equation}%
Then, gathering (\ref{39}) - (\ref{45}) together yields%
\begin{equation*}
\begin{array}{c}
\int_{\Omega }|\nabla \overline{u}_{1}|^{p_{1}(x)-2}\nabla \overline{u}%
_{1}\nabla \varphi _{1}\text{ }\mathrm{d}x\geq \int_{\Omega }(c_{\rho }+\bar{%
\eta}\overline{u}_{1}^{p_{1}^{-}-1})\varphi _{1}\text{ }\mathrm{d}x \\ 
\geq \int_{\Omega }f_{1}(x,\overline{u}_{1},s_{2})\varphi _{i}\text{ }%
\mathrm{d}x%
\end{array}%
\end{equation*}%
and%
\begin{equation*}
\begin{array}{c}
\int_{\Omega }|\nabla \overline{u}_{2}|^{p_{2}(x)-2}\nabla \overline{u}%
_{2}\nabla \varphi _{2}\text{ }\mathrm{d}x\geq \int_{\Omega }(c_{\rho }+\bar{%
\eta}\overline{u}_{2}^{p_{2}^{-}-1})\varphi _{2}\text{ }dx \\ 
\geq \int_{\Omega }f_{2}(x,s_{1},\overline{u}_{2})\varphi _{i}\text{ }%
\mathrm{d}x,%
\end{array}%
\end{equation*}%
for all $\varphi _{i}\in W_{0}^{1,p_{i}(x)}(\Omega )$ with $\varphi _{i}\geq
0$, for all $(s_{1},s_{2})\in \lbrack 0,\overline{u}_{1}]\times \lbrack 0,%
\overline{u}_{2}]$. This proves that $(\overline{u}_{1},\overline{u}_{2})$
is a supersolution for system $(\mathrm{P})$.

\mathstrut

\noindent \textbf{Existence of subsolution:} \newline

Next, we show that%
\begin{equation}
(\underline{u}_{1},\underline{u}_{2})=\varepsilon (\phi _{1,p_{1}},\phi
_{1,p_{2}})  \label{sub2}
\end{equation}%
is a subsolution for $(\mathrm{P})$ for $\varepsilon \in (0,1)$. We claim
that $\overline{u}_{i}\geq \underline{u}_{i}$ in $\overline{\Omega }$.
Indeed, from (\ref{26}), (\ref{27}) and Corollary \ref{C}, there is $\bar{x}%
_{i}\in \Omega $ such that%
\begin{equation}
\begin{array}{l}
\int_{\Omega }\varepsilon ^{p_{i}(x)-1}|\nabla \phi
_{1,p_{i}}|^{p_{i}(x)-2}\nabla \phi _{1,p_{i}}\nabla \varphi _{i}\text{ }%
\mathrm{d}x \\ 
=\varepsilon ^{p_{i}(\bar{x}_{i})-1}\lambda _{1,p_{i}}\int_{\Omega }\phi
_{1,p_{i}}^{p_{i}(x)-1}\varphi _{i}\text{ }\mathrm{d}x \\ 
\leq \varepsilon ^{p_{i}^{-}-1}\lambda _{1,p_{i}}\int_{\Omega }\phi
_{1,p_{i}}^{p_{i}(x)-1}\varphi _{i}\text{ }\mathrm{d}x,%
\end{array}
\label{31*}
\end{equation}%
for $\varepsilon >0$ sufficiently small, for all $\varphi _{i}\in
W_{0}^{1,p_{i}(x)}(\Omega )$ with $\varphi _{i}\geq 0$. Then, on account of (%
\ref{sub1}), (\ref{sub2}), (\ref{31*}) and the first equality in (\ref{31}),
it holds 
\begin{equation*}
\begin{array}{l}
\int_{\Omega }|\nabla \underline{u}_{i}|^{p_{i}(x)-2}\nabla \underline{u}%
_{i}\nabla \varphi _{i}\text{ }\mathrm{d}x\leq \int_{\Omega }|\nabla 
\overline{u}_{i}|^{p_{i}(x)-2}\nabla \overline{u}_{i}\nabla \varphi _{i}%
\text{ }\mathrm{d}x,%
\end{array}%
\end{equation*}%
for all $\varphi _{i}\in W_{0}^{1,p_{i}(x)}(\Omega )$ with $\varphi _{i}\geq
0$. This proves the claim.

In view of assumption $(\mathrm{H.2})$ there exists $\hat{\rho}=\hat{\rho}%
(\eta _{i})>0$ such that 
\begin{equation*}
\frac{f_{i}(x,s_{1},s_{2})}{s_{i}^{p_{i}^{-}-1}}\geq \eta _{i},\text{ for
a.e. }x\in \Omega \text{, for all }0<s_{i},s_{j}<\hat{\rho}.
\end{equation*}%
Thus%
\begin{equation}
f_{i}(x,s_{1},s_{2})\geq \eta _{i}s_{i}^{p_{i}^{-}-1},\text{\ for all }%
0<s_{1},s_{2}<\hat{\rho}.  \label{37}
\end{equation}%
For $\phi _{1,p_{i}}(x)>1,$ in view of $(\mathrm{H.2}),$ one has%
\begin{equation*}
\begin{array}{l}
\lambda _{1,p_{i}}\phi _{1,p_{i}}^{p_{i}(x)-1}(x)\leq \lambda _{1,p_{i}}\phi
_{1,p_{i}}^{p_{i}^{+}-1}(x)\leq \lambda _{1,p_{i}}\left\Vert \phi
_{1,p_{i}}\right\Vert _{\infty }^{p_{i}^{+}-1}\leq \eta _{i}\leq \eta
_{i}\phi _{1,p_{i}}^{p^{-}-1}(x)\text{ in }\Omega ,%
\end{array}%
\end{equation*}%
while, if $\phi _{1,p_{i}}(x)\leq 1,$ we have%
\begin{equation*}
\begin{array}{l}
\lambda _{1,p_{i}}\phi _{1,p_{i}}^{p_{i}(x)-1}(x)\leq \lambda _{1,p_{i}}\phi
_{1,p_{i}}^{p_{i}^{-}-1}(x)\leq \eta _{i}\phi _{1,p_{i}}^{p^{-}-1}(x)\text{
in }\Omega .%
\end{array}%
\end{equation*}%
Hence, it turns out that%
\begin{equation}
\begin{array}{c}
\varepsilon ^{p_{i}^{-}-1}\lambda _{1,p_{i}}\int_{\Omega }\phi
_{1,p_{i}}^{p_{i}(x)-1}\varphi _{i}\text{ }\mathrm{d}x\leq \varepsilon
^{p_{i}^{-}-1}\eta _{i}\int_{\Omega }\phi _{1,p_{i}}^{p^{-}-1}\varphi _{i}%
\text{ }\mathrm{d}x \\ 
=\eta _{i}\int_{\Omega }(\varepsilon \phi _{1,p_{i}})^{p_{i}^{-}-1}\varphi
_{i}\text{ }\mathrm{d}x,%
\end{array}
\label{38}
\end{equation}%
for all $\varphi _{i}\in W_{0}^{1,p_{i}(x)}(\Omega )$ with $\varphi _{i}\geq
0$. Then, assuming $\varepsilon >0$ so small that $\varepsilon \phi
_{1,p_{i}}(x)\leq \hat{\rho},$ $\forall x\in \Omega $, $i=1,2$, gathering (%
\ref{sub2}), (\ref{31*}), (\ref{37}) and (\ref{38}) together yield%
\begin{equation*}
\begin{array}{c}
\int_{\Omega }|\nabla \underline{u}_{1}|^{p_{1}(x)-2}\nabla \underline{u}%
_{1}\nabla \varphi _{1}\text{ }dx=\int_{\Omega }\varepsilon
^{p_{1}(x)-1}|\nabla \phi _{1,p_{1}}|^{p_{1}(x)-2}\nabla \phi
_{1,p_{1}}\nabla \varphi _{1}\text{ }dx \\ 
\leq \int_{\Omega }\eta _{1}\underline{u}_{1}^{p_{1}^{-}-1}\varphi _{1}\text{
}dx\leq \int_{\Omega }f_{1}(x,\underline{u}_{1},s_{2})\varphi _{1}\text{ }dx,%
\end{array}%
\end{equation*}%
and%
\begin{equation*}
\begin{array}{c}
\int_{\Omega }|\nabla \underline{u}_{2}|^{p_{2}(x)-2}\nabla \underline{u}%
_{2}\nabla \varphi _{2}\text{ }dx=\int_{\Omega }\varepsilon
^{p_{2}(x)-1}|\nabla \phi _{1,p_{2}}|^{p_{2}(x)-2}\nabla \phi
_{1,p_{2}}\nabla \varphi _{2}\text{ }\mathrm{d}x \\ 
\leq \int_{\Omega }\eta _{2}\underline{u}_{2}^{p_{2}^{-}-1}\varphi _{2}\text{
}dx\leq \int_{\Omega }f_{2}(x,s_{1},\underline{u}_{2})\varphi _{2}\text{ }dx,%
\end{array}%
\end{equation*}%
for all $\varphi _{i}\in W_{0}^{1,p_{i}(x)}(\Omega )$ with $\varphi _{i}\geq
0$, for all $(s_{1},s_{2})\in \lbrack \underline{u}_{1},\overline{u}%
_{1}]\times \lbrack \underline{u}_{2},\overline{u}_{2}]$, showing that $(%
\underline{u}_{1},\underline{u}_{2})$ is a subsolution for $(\mathrm{P})$.

\mathstrut

\noindent \textbf{Proof of Theorem \ref{T1}:} \newline

Now we are in position to apply \cite[Theorem 3.1]{AMT} which garantees the
existence of a positive solution $(u_{1,+},u_{2,+})$ satisfying $\underline{u%
}_{i}\leq u_{i,+}\leq \overline{u}_{i}.$ By an analogous approach as before,
on the basis of assumptions $(\mathrm{H.1})$, $(\mathrm{H.2})$ and $(\mathrm{%
H.3})$, we can show that the pair of functions $(-\overline{u}_{1},-%
\underline{u}_{1})$ and $(-\overline{u}_{2},-\underline{u}_{2})$ constitute
a pair of negative sub- and supersolution for problem $(\mathrm{P})$.
Consequently, we obtain a negative solution $(u_{1,-},u_{2,-})$ within $[-%
\overline{u}_{1},-\underline{u}_{1}]\times \lbrack -\overline{u}_{2},-%
\underline{u}_{2}]$. Furthermore, the nonlinear regularity theory up to the
boundary (see \cite[Theorem 1.2]{F}) implies that the solutions $%
(u_{1,+},u_{2,+})$ and $(u_{1,-},u_{2,-})$ belong to $C^{1,\sigma }(%
\overline{\Omega })\times C^{1,\sigma }(\overline{\Omega })$ for some $%
\sigma \in (0,1)$. This completes the proof.

\section{Proof of Theorem \protect\ref{T2}: Positive solutions}

\label{S3}

In this section we show that problem $(\mathrm{P})$\ admits a second
positive solution different from $(u_{1,+},u_{2,+})$. The proof is based on
topological degree theory. Precisely, we prove that the degree of an
operator corresponding to system $(\mathrm{P})$ is equal to $0$ on a ball $%
\mathcal{B}_{R}$, while the degree is $1$ in a smaller ball $\mathcal{B}_{%
\tilde{R}}\subset \mathcal{B}_{R},$ with $\tilde{R}<R$. By the excision
property of Leray-Schauder degree, we find a positive solution $(\breve{u}%
_{1},\breve{u}_{2})$ in $\mathcal{B}_{R}\backslash \overline{\mathcal{B}_{%
\hat{R}}}$ such that $\breve{u}_{1}\neq u_{1,+}$ \ and $\breve{u}_{2}\neq
u_{2,+}$.

\subsection{Topological degree on $\mathcal{B}_{R}$}

For every $t\in \lbrack 0,1]$, we consider the problem 
\begin{equation*}
(\mathrm{P}_{t})\qquad \left\{ 
\begin{array}{ll}
-\Delta _{p_{i}(x)}u_{i}=f_{i,t}(x,u_{1},u_{2}) & \text{in }\Omega \\ 
u_{i}=0 & \text{on }\partial \Omega ,%
\end{array}%
\right.
\end{equation*}%
with 
\begin{equation}
\begin{array}{l}
f_{1,t}(x,u_{1},u_{2})=tf_{i}(x,u_{1},u_{2})+(1-t)\left[ J_{i}\frac{%
(u_{i}^{+})^{p_{i}(x)-1}}{(\max \{1,\Vert u\Vert \})^{p_{1}(x)-1}}+\delta
\lambda _{1,p_{i}}\phi _{1,p_{i}}^{p_{i}(x)-1}\right] ,%
\end{array}
\label{12}
\end{equation}%
where $\delta >0$ is a small constant and%
\begin{equation}
0<J_{i}<\lambda _{1,p_{i}}\min \{1,p_{i}^{-}-1\},\text{ }i=1,2.  \label{43}
\end{equation}

With a constant $R>0$, let define the homotopy 
\begin{equation*}
\begin{array}{lll}
\mathcal{H}: & [0,1]\times \overline{\mathcal{B}}_{R} & \rightarrow
W^{-1,p_{1}^{\prime }(x)}(\Omega )\times W^{-1,p_{2}^{\prime }(x)}(\Omega )
\\ 
& (t,u_{1},u_{2}) & \rightarrow (\mathcal{H}_{1}(t,u_{1},u_{2}),\mathcal{H}%
_{2}(t,u_{1},u_{2}))%
\end{array}%
\end{equation*}%
where $\mathcal{H}_{i}$ are given by 
\begin{equation*}
\begin{array}{l}
\left\langle \mathcal{H}_{i}(t,u_{1},u_{2}),\varphi _{i}\right\rangle
=\int_{\Omega }|\nabla u_{i}|^{p_{i}(x)-2}\nabla u_{i}\nabla \varphi \,%
\mathrm{d}x-\int_{\Omega }f_{i,t}(x,u_{1},u_{2})\varphi _{i}\text{ }\mathrm{d%
}x,%
\end{array}%
\end{equation*}%
for $\varphi _{i}\in W_{0}^{1,p_{i}(x)}(\Omega )$ and $\overline{\mathcal{B}%
_{R}}$ is the closure of $\mathcal{B}_{R}$ in $W_{0}^{1,p_{1}(x)}(\Omega
)\times W_{0}^{1,p_{2}(x)}(\Omega )$ with%
\begin{equation*}
\mathcal{B}_{R}:=\left\{ (u_{1},u_{2})\in W_{0}^{1,p_{1}(x)}(\Omega )\times
W_{0}^{1,p_{2}(x)}(\Omega ):\,\Vert (u_{1},u_{2})\Vert <R\right\} .
\end{equation*}

\begin{lemma}
The homotopies $\mathcal{H}_{1}$ and $\mathcal{H}_{2}$ are continuous and
compact.
\end{lemma}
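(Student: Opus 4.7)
The plan is to split each homotopy as $\mathcal{H}_i(t,u_1,u_2) = A_i(u_i) - B_{i,t}(u_1,u_2)$, where $A_i(u_i) := -\Delta_{p_i(x)} u_i$ denotes the principal differential part, viewed as an operator from $W_0^{1,p_i(x)}(\Omega)$ into its dual $W^{-1,p_i'(x)}(\Omega)$, and $B_{i,t}$ is the lower-order functional $\varphi_i \mapsto \int_\Omega f_{i,t}(x,u_1,u_2)\,\varphi_i\,\mathrm{d}x$. I would then verify continuity and compactness separately for each piece.

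For continuity of $A_i$, I would invoke the standard fact that the $p_i(x)$-Laplacian is continuous (indeed, bounded) from $W_0^{1,p_i(x)}(\Omega)$ into $W^{-1,p_i'(x)}(\Omega)$, which follows from H\"older's inequality in variable exponent Lebesgue spaces combined with continuity of the vector Nemytskii map $\xi \mapsto |\xi|^{p_i(x)-2}\xi$. For continuity of $B_{i,t}$, the strategy is to combine $(\mathrm{H.1})$ (or $(\mathrm{H}'\mathrm{.1})$) with $(\mathrm{H.3})$, exactly as in \eqref{32}, to obtain a subcritical growth bound $|f_i(x,s_1,s_2)| \leq c + \bar{\eta}|s_i|^{p_i^- - 1}$. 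Since $p_i^- - 1 \leq p_i(x) - 1$, the standard Nemytskii continuity theorem in variable exponent Lebesgue spaces yields continuity of $(t,u_1,u_2) \mapsto f_{i,t}(\cdot,u_1,u_2)$ into $L^{p_i'(x)}(\Omega)$; the affine dependence on $t$, the fact that $\phi_{1,p_i}^{p_i(x)-1}$ is a fixed bounded function, and the bounded (and constant-in-$x$) factor $\max\{1,\Vert u\Vert\}^{-(p_i(x)-1)}$ cause no additional difficulty. Post-composition with the continuous embedding $L^{p_i'(x)}(\Omega) \hookrightarrow W^{-1,p_i'(x)}(\Omega)$ concludes this step.

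For compactness, the operative notion in the degree framework to be used later is that the lower-order part $B_{i,t}$ be compact, while $A_i$ is of class $(S_+)$ (a standard property of the $p(x)$-Laplacian). The essential ingredient is the compact Sobolev embedding $W_0^{1,p_i(x)}(\Omega) \hookrightarrow\hookrightarrow L^{p_i(x)}(\Omega)$. Given any sequence $(t_n, u_1^n, u_2^n) \in [0,1] \times \overline{\mathcal{B}_R}$, I would extract a subsequence along which $t_n \to t$ and $(u_1^n, u_2^n) \to (u_1,u_2)$ strongly in $L^{p_1(x)}(\Omega) \times L^{p_2(x)}(\Omega)$; the Nemytskii continuity established above, together with continuity of $u \mapsto \max\{1,\Vert u\Vert\}$, then yields strong convergence of $f_{i,t_n}(\cdot, u_1^n, u_2^n)$ in $L^{p_i'(x)}(\Omega)$, hence in $W^{-1,p_i'(x)}(\Omega)$, which is the required compactness.

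The main obstacle is securing the subcritical growth bound on $f_{i,t}$: the combination of $(\mathrm{H.1})$ (or $(\mathrm{H}'\mathrm{.1})$) with $(\mathrm{H.3})$ is essential in order to dominate $|f_i|$ by $c + \bar{\eta}|s_i|^{p_i^- - 1}$ and thereby place the Nemytskii operator safely below the critical Sobolev threshold. Once this bound is in hand, the remainder reduces to a routine application of known properties of the $p(x)$-Laplacian together with standard Nemytskii theory in variable exponent Sobolev spaces.
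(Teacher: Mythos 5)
Your plan matches the paper's own proof in its essentials: both establish continuity of the lower-order term $(t,u_1,u_2)\mapsto f_{i,t}(\cdot,u_1,u_2)$ into $L^{p_i'(x)}(\Omega)$ via the growth bound (\ref{32}) together with a dominated-convergence argument (the paper cites \cite[Lemma 2.3.16]{DHHR}; you package this as Nemytskii continuity in variable-exponent Lebesgue spaces, which is the same content), and both derive compactness from the compact embedding $W_0^{1,p_i(x)}(\Omega)\hookrightarrow L^{p_i(x)}(\Omega)$. Your decomposition $\mathcal{H}_i=A_i-B_{i,t}$, with the remark that $A_i=-\Delta_{p_i(x)}$ is of class $(S_+)$ and not compact, is in fact a helpful sharpening of the lemma's wording, since the $p_i(x)$-Laplacian itself is not a compact map and the degree arguments downstream really exploit compactness of the fixed-point operator $A_i^{-1}\circ B_{i,t}$.

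Two small points warrant care. First, in the compactness step you cannot appeal to continuity of $u\mapsto\max\{1,\|u\|\}$: for a bounded sequence in $W_0^{1,p_i(x)}(\Omega)$ that converges only weakly (and strongly in $L^{p_i(x)}$), the norms $\|u_{i,n}\|$ need not converge to $\|u_i\|$. Instead pass to a further subsequence along which $\|u_{i,n}\|\to\alpha$ for some $\alpha\geq 0$; this suffices because compactness asks only for a convergent subsequence of images, not convergence to $B_{i,t}(u_1,u_2)$. Second, the factor $(u_i^+)^{p_i(x)-1}/\max\{1,\|u\|\}^{p_i(x)-1}$ is \emph{not} a Nemytskii operator, owing to the nonlocal norm in the denominator; your strategy of factoring out the $x$-uniformly convergent scalar prefactor from the genuine Nemytskii map $u_i\mapsto(u_i^+)^{p_i(x)-1}$ is sound, and indeed tidier than the paper's explicit treatment in (\ref{42})--(\ref{48}), but those details should be recorded rather than waved away as causing no additional difficulty.
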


\begin{proof}
We prove only the continuity of $\mathcal{H}_{1}$ because that of $\mathcal{H%
}_{2}$ can be justified similarly. Let $(t_{n},u_{1,n},u_{2,n})\in \lbrack
0,1]\times \overline{\mathcal{B}}_{R}$ with 
\begin{equation}
(t_{n},u_{1,n},u_{2,n})\rightarrow (t,u_{1},u_{2})\text{ \ in }[0,1]\times
W_{0}^{1,p_{1}(x)}(\Omega )\times {W_{0}^{1,p_{2}(x)}(\Omega )}.  \label{41}
\end{equation}%
Passing to relabeled subsequences, there holds the convergence 
\begin{equation}
u_{i,n}\rightarrow u_{i}\text{ \ a.e. in }\Omega  \label{44}
\end{equation}%
and there exists a function $h_{i}\in L^{p_{i}(x)}(\Omega )$ such that%
\begin{equation}
|u_{i,n}(x)|\leq h_{i}(x)\text{ \ a.e. \ in }\Omega ,\text{ for }i=1,2.
\label{34}
\end{equation}%
Noticing that 
\begin{equation*}
\begin{array}{l}
t_{n}f_{1}(x,u_{1,n},u_{2,n})-tf_{1}(x,u_{1},u_{2}) \\ 
=(t_{n}-t)f_{1}(x,u_{1,n},u_{2,n})+t\left[
f_{1}(x,u_{1,n},u_{2,n})-f_{1}(x,u_{1},u_{2})\right] ,%
\end{array}%
\end{equation*}%
it suffices to prove that 
\begin{equation}
\left\{ f_{1,t_{n}}(x,u_{1,n},u_{2,n})\right\} \rightarrow \left\{
f_{1,t}(x,u_{1},u_{2})\right\} \text{\ in }L^{\frac{p_{1}(x)}{p_{1}(x)-1}%
}(\Omega ).  \label{35}
\end{equation}%
From (\ref{32}) we have that $f_{1}(x,u_{1,n},u_{2,n})\in
L^{p_{1}(x)/p_{1}(x)-1}(\Omega )$ while the fact that $f_{1}$ is a Carath%
\'{e}odory function implies 
\begin{equation*}
f_{1}(x,u_{1,n}(x),u_{2,n}(x))\rightarrow f_{1}(x,u_{1}(x),u_{2}(x))\text{ \
a.e. \ in }\Omega .
\end{equation*}%
Using (\ref{32}), (\ref{34})\textbf{\ }and the embedding $%
W_{0}^{1,p_{1}(x)}(\Omega )\hookrightarrow L^{p_{1}(x)}(\Omega ),$\textbf{\ }%
it follows that%
\begin{equation*}
\left\vert f_{1}(x,u_{1,n_{k}},u_{2,n_{k}})-f_{1}(x,u_{1},u_{2})\right\vert
^{\frac{p_{1}(x)}{p_{1}(x)-1}}\leq \left[ 2C_{p}+\overline{\eta _{i}}\left(
|h|^{p_{1}^{-}-1}+|u_{1}|^{p_{1}^{-}-1}\right) \right] ^{\frac{p_{1}(x)}{%
p_{1}(x)-1}}.
\end{equation*}%
Then, the dominated convergence result in \cite[Lemma 2.3.16]{DHHR} implies
that (\ref{35}) holds true.

The next step in the proof is to show that\textbf{\ }%
\begin{equation*}
\begin{array}{l}
(1-t_{n})\frac{(u_{1,n}^{+})^{p_{1}(x)-1}}{\left( \max \left\{ 1,\Vert
u_{1,n}\Vert \right\} \right) ^{p_{1}(x)-1}}\rightarrow (1-t)\frac{%
(u_{1}^{+})^{p_{1}(x)-1}}{\left( \max \left\{ 1,\Vert u_{1}\Vert \right\}
\right) ^{p_{1}(x)-1}}\text{ in }L^{\frac{p_{1}(x)}{p_{1}(x)-1}}(\Omega ).%
\end{array}%
\end{equation*}%
As above one can write 
\begin{equation}
\begin{array}{l}
(1-t_{n})\frac{(u_{1,n}^{+})^{p_{1}(x)-1}}{\left( \max \left\{ 1,\Vert
u_{1,n}\Vert \right\} \right) ^{p_{1}(x)-1}}-(1-t)\frac{%
(u_{1}^{+})^{p_{1}(x)-1}}{\left( \max \left\{ 1,\Vert u_{1}\Vert \right\}
\right) ^{p_{1}(x)-1}} \\ 
=(t-t_{n})\frac{(u_{1,n}^{+})^{p_{1}(x)-1}}{\left( \max \left\{ 1,\Vert
u_{1,n}\Vert \right\} \right) ^{p_{1}(x)-1}} \\ 
\text{ \ \ }+(1-t)\left( \frac{(u_{1,n}^{+})^{p_{1}(x)-1}}{\left( \max
\left\{ 1,\Vert u_{1,n}\Vert \right\} \right) ^{p_{1}(x)-1}}-\frac{%
(u_{1}^{+})^{p_{1}(x)-1}}{\left( \max \left\{ 1,\Vert u_{1}\Vert \right\}
\right) ^{p_{1}(x)-1}}\right)  \\ 
=(t-t_{n})\frac{(u_{1,n}^{+})^{p_{1}(x)-1}}{\left( \max \left\{ 1,\Vert
u_{1,n}\Vert \right\} \right) ^{p_{1}(x)-1}} \\ 
\text{ \ \ }+(1-t)(u_{1,n}^{+})^{p_{1}(x)-1}\left( \frac{1}{\left( \max
\left\{ 1,\Vert u_{1,n}\Vert \right\} \right) ^{p_{1}(x)-1}}-\frac{1}{\left(
\max \left\{ 1,\Vert u_{1}\Vert \right\} \right) ^{p_{1}(x)-1}}\right)  \\ 
\text{ \ \ }+\frac{1-t}{\left( \max \left\{ 1,\Vert u_{1}\Vert \right\}
\right) ^{p_{1}(x)-1}}\left(
(u_{1,n}^{+})^{p_{1}(x)-1}-(u_{1}^{+})^{p_{1}(x)-1}\right) 
\end{array}
\label{42}
\end{equation}%
The triangle inequalities%
\begin{equation*}
\Vert u_{1,n}\Vert \leq \Vert u_{1,n}-u_{1}\Vert +\Vert u_{1}\Vert \text{ \
and \ }\Vert u_{1}\Vert \leq \Vert u_{1,n}-u_{1}\Vert +\Vert u_{1,n}\Vert 
\end{equation*}%
ensure that $\Vert u_{1}\Vert >1$ (resp. $\leq 1$) whenever $\Vert
u_{1,n}\Vert >1$ (resp. $\leq 1$) and therefore, due to (\ref{41}), one has%
\begin{equation*}
\max \left\{ 1,\left\Vert u_{1,n}\right\Vert \right\} \rightarrow \Vert
u_{1}\Vert =\max \left\{ 1,\Vert u_{1}\Vert \right\} .
\end{equation*}%
Hence, for all $x\in \Omega ,$ we have 
\begin{equation*}
\frac{1}{\left( \max \left\{ 1,\Vert u_{1,n_{k}}\Vert \right\} \right)
^{p_{1}(x)-1}}-\frac{1}{\left( \max \left\{ 1,\Vert u_{1}\Vert \right\}
\right) ^{p_{1}(x)-1}}\rightarrow 0,
\end{equation*}%
which implies that 
\begin{equation*}
\begin{array}{l}
\left\vert \frac{1}{\left( \max \left\{ 1,\Vert u_{1,n}\Vert \right\}
\right) ^{p_{1}(\cdot )-1}}-\frac{1}{\left( \max \left\{ 1,\Vert u_{1}\Vert
\right\} \right) ^{p_{1}(\cdot )-1}}\right\vert
^{p_{1}(x)/p_{1}(x)-1}\rightarrow 0.%
\end{array}%
\end{equation*}%
Moreover, thanks to the estimate 
\begin{equation*}
\begin{array}{l}
\left\vert \frac{1}{\left( \max \left\{ 1,\Vert u_{1,n}\Vert \right\}
\right) ^{p_{1}(x)-1}}-\frac{1}{\left( \max \left\{ 1,\Vert u_{1}\Vert
\right\} \right) ^{p_{1}(x)-1}}\right\vert \leq 2,%
\end{array}%
\end{equation*}%
we conclude, from the dominated convergence theorem, that 
\begin{equation*}
\begin{array}{l}
\frac{1}{\left( \max \left\{ 1,\Vert u_{1,n}\Vert \right\} \right)
^{p_{1}(\cdot )-1}}\rightarrow \frac{1}{\left( \max \left\{ 1,\Vert
u_{1}\Vert \right\} \right) ^{p_{1}(\cdot )-1}}\text{ in }L^{\frac{p_{1}(x)}{%
p_{1}(x)-1}}(\Omega ).%
\end{array}%
\end{equation*}%
Now, we focus on the last term in (\ref{42}). Observe that%
\begin{equation}
\begin{array}{l}
(u_{1,n}^{+})^{p_{1}(x)-1}-(u_{1}^{+})^{p_{1}(x)-1}=\chi _{\{u_{1,n}\geq
0\}}|u_{1,n}|^{p_{1}(x)-1}-\chi _{\{u_{1}\geq 0\}}|u_{1}|^{p_{1}(x)-1} \\ 
=\left( \chi _{\{u_{1,n}\geq 0\}}-\chi _{\{u_{1}\geq 0\}}\right)
|u_{1}|^{p_{1}(x)-1}+\chi _{\{u_{1}\geq 0\}}\left(
|u_{1,n}|^{p_{1}(x)-1}-|u_{1}|^{p_{1}(x)-1}\right) .%
\end{array}
\label{48}
\end{equation}%
Due to (\ref{44}) and the estimate $|\chi _{\{u_{1,n}\geq 0\}}(x)-\chi
_{\{u_{1}\geq 0\}}(x)|\leq 2,$ it follows that%
\begin{equation}
\chi _{\{u_{1,n}\geq 0\}}-\chi _{\{u_{1}\geq 0\}}\rightarrow 0\text{ in }L^{%
\frac{p_{1}(x)}{p_{1}(x)-1}}(\Omega ).  \label{47}
\end{equation}%
Moreover, since by (\ref{44}) and (\ref{34}) we have 
\begin{equation*}
|u_{1,n}|^{p_{1}(x)-1}-|u_{1}|^{p_{1}(x)-1}\rightarrow 0\text{ \ a.e }x\in
\Omega 
\end{equation*}%
and%
\begin{equation*}
\left\vert |u_{1,n}|^{p_{1}(x)-1}-|u_{1}|^{p_{1}(x)-1}\right\vert \leq
h^{p_{1}(x)-1}+|u_{1}|^{p_{1}(x)-1},
\end{equation*}%
where the positive function $h^{p_{1}(x)-1}+|u_{1}|^{p_{1}(x)-1}$ belongs to 
$L^{p_{1}(x)/p_{1}(x)-1}(\Omega ).$ The dominated convergence theorem
implies that 
\begin{equation*}
\lim_{n\rightarrow +\infty }\rho _{\frac{p_{1}(x)}{p_{1}(x)-1}}\left(
|u_{1,n}|^{p_{1}(x)-1}-|u_{1}|^{p_{1}(x)-1}\right) =0.
\end{equation*}%
which by \cite[Theorem 1.4]{FZ2} shows that 
\begin{equation*}
|u_{1,n}|^{p_{1}(x)-1}\rightarrow |u_{1}|^{p_{1}(x)-1}\text{ in }L^{\frac{%
p_{1}(x)}{p_{1}(x)-1}}(\Omega ).
\end{equation*}%
Hence, bearing in mind (\ref{48}) and (\ref{47}), we derive that%
\begin{equation}
(u_{1,n}^{+})^{p_{1}(x)-1}\rightarrow (u_{1}^{+})^{p_{1}(x)-1}\text{ in }L^{%
\frac{p_{1}(x)}{p_{1}(x)-1}}(\Omega ).  \label{46}
\end{equation}%
Gathering (\ref{35}) and (\ref{46}) together, we conclude that the homotopy $%
\mathcal{H}_{1}$ is continuous from $W_{0}^{1,p_{1}(x)}(\Omega )\times
W_{0}^{1,p_{2}(x)}(\Omega )$ to $L^{\frac{p_{1}(x)}{p_{1}(x)-1}}(\Omega ).$
We proceed analogously to prove that the homotopy $\mathcal{H}_{2}$ is
continuous from $W_{0}^{1,p_{1}(x)}(\Omega )\times W_{0}^{1,p_{2}(x)}(\Omega
)$ to $L^{\frac{p_{2}(x)}{p_{2}(x)-1}}(\Omega )$.

Finally, from the estimate (\ref{32}) and the compactness of the embedding $%
W_{0}^{1,p_{i}(x)}(\Omega )\hookrightarrow L^{p_{i}(x)}(\Omega ),$ it is
readly seen that homotopies $\mathcal{H}_{1}$ and $\mathcal{H}_{2}$ are
compact. This completes the proof.
\end{proof}

\begin{proposition}
\label{P1} Assume $(\mathrm{H}^{\prime }.1)$ and $(\mathrm{H}.3)$ hold. If $%
R>0$ is sufficiently large, then the Leray-Schauder topological degree 
\begin{equation*}
\deg (\mathcal{H}(t,\cdot ,\cdot ),\mathcal{B}_{R},0)
\end{equation*}%
is well defined for every $t\in \lbrack 0,1]$. Moreover, it holds 
\begin{equation}
\begin{array}{c}
\deg (\mathcal{H}(1,\cdot ,\cdot ),\mathcal{B}_{R},0)=\deg (\mathcal{H}%
(0,\cdot ,\cdot ),\mathcal{B}_{R},0)=0.%
\end{array}
\label{21}
\end{equation}
\end{proposition}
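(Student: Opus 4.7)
The strategy has two parts: first, establish a uniform a priori bound on all zeros of $\mathcal{H}(t,\cdot,\cdot)$ so that, for $R$ large enough, the degree is well defined for every $t\in[0,1]$ and the homotopy invariance of the Leray--Schauder degree applies; second, compute the degree at the easier endpoint $t=0$ by showing $(\mathrm{P}_{0})$ has \emph{no} solutions at all (so that the degree is $0$), using Lemma~\ref{L9}. Homotopy invariance then transfers the value $0$ to $t=1$.

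\textbf{Step 1: a priori bound.} Let $(u_{1},u_{2})$ satisfy $\mathcal{H}(t,u_{1},u_{2})=0$. Testing with $u_{i}$ gives
\[
\int_{\Omega}|\nabla u_{i}|^{p_{i}(x)}\,\mathrm{d}x=\int_{\Omega}f_{i,t}(x,u_{1},u_{2})\,u_{i}\,\mathrm{d}x.
\]
For the $f_{i}$-contribution I would refine estimate~(\ref{32}) by exploiting $(\mathrm{H}.3)$: for every $\eta>0$ there is $c_{\eta}>0$ with $|f_{i}(x,s_{1},s_{2})|\leq c_{\eta}+\eta|s_{i}|^{p_{i}^{-}-1}$. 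For the $(1-t)$-contribution the key pointwise inequality is
\[
\frac{(u_{i}^{+})^{p_{i}(x)-1}}{(\max\{1,\Vert u\Vert\})^{p_{i}(x)-1}}\,u_{i}\leq|u_{i}|^{p_{i}(x)},
\]
together with the Rayleigh characterisation~(\ref{71}) giving $\int_{\Omega}|u_{i}|^{p_{i}(x)}\mathrm{d}x\leq\lambda_{1,p_{i}}^{-1}\int_{\Omega}|\nabla u_{i}|^{p_{i}(x)}\mathrm{d}x$. Splitting $\Omega$ according to $|u_{i}|\gtrless 1$, the term $\eta\int_{\Omega}|u_{i}|^{p_{i}^{-}}\mathrm{d}x$ is absorbed into $\eta(\int_{\Omega}|u_{i}|^{p_{i}(x)}\mathrm{d}x+|\Omega|)$, while the linear boundary pieces ($c_{\eta}\int|u_{i}|$ and $\delta\lambda_{1,p_{i}}\int\phi_{1,p_{i}}^{p_{i}(x)-1}|u_{i}|$) are treated as lower-order terms via Young's inequality. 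Since~(\ref{43}) yields $J_{i}<\lambda_{1,p_{i}}$, for $\eta$ small the coefficient $1-(J_{i}+\eta)/\lambda_{1,p_{i}}$ is positive, and one gets $\int_{\Omega}|\nabla u_{i}|^{p_{i}(x)}\,\mathrm{d}x\leq C$ uniformly in $t\in[0,1]$. Lemma~\ref{L1} then converts this modular estimate into a uniform bound on $\Vert u_{i}\Vert$, hence on $\Vert(u_{1},u_{2})\Vert$. Fixing $R$ strictly larger than this bound, $\mathcal{H}(t,\cdot,\cdot)$ has no zero on $\partial\mathcal{B}_{R}$ for any $t\in[0,1]$; the degree is therefore well defined and constant in $t$.

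\textbf{Step 2: the degree vanishes at $t=0$.} At $t=0$, each component of $(\mathrm{P}_{0})$ reads
\[
-\Delta_{p_{i}(x)}u_{i}=J_{i}\left(\frac{u_{i}^{+}}{\max\{1,\Vert u\Vert\}}\right)^{p_{i}(x)-1}+\delta\lambda_{1,p_{i}}\phi_{1,p_{i}}^{p_{i}(x)-1},
\]
which is exactly equation~(\ref{23}) from Lemma~\ref{L9} (the coupling between $u_{1}$ and $u_{2}$ enters only through the scalar $\max\{1,\Vert u\Vert\}\geq 1$, and the proof of Lemma~\ref{L9} only uses this lower bound, so it applies verbatim). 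The bound~(\ref{43}) gives $J_{i}<\lambda_{1,p_{i}}(p_{i}^{-}-1)$, i.e.\ condition~(\ref{3}). Lemma~\ref{L9} therefore guarantees that, for $\delta>0$ small, no such $u_{i}$ exists; thus $(\mathrm{P}_{0})$ has no solution, the zero set of $\mathcal{H}(0,\cdot,\cdot)$ in $\mathcal{B}_{R}$ is empty, and $\deg(\mathcal{H}(0,\cdot,\cdot),\mathcal{B}_{R},0)=0$. Combined with Step~1, this proves~(\ref{21}).

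\textbf{Main obstacle.} The technically delicate step is Step~1: because $-\Delta_{p_{i}(x)}$ is not homogeneous and the variable exponent prevents a direct translation between the modular $\int|u_{i}|^{p_{i}(x)}$ and a genuine $L^{p}$-norm, one has to juggle the exponents $p_{i}^{-}$, $p_{i}(x)$, and $p_{i}^{+}$ via case-splitting on $\{|u_{i}|\geq 1\}$ versus $\{|u_{i}|<1\}$, and to calibrate $\eta$ so that $J_{i}+\eta<\lambda_{1,p_{i}}$ (which is possible precisely because of the strict inequality in~(\ref{43})). The normalisation $\max\{1,\Vert u\Vert\}$ inside $f_{i,0}$ is what keeps the nonlinearity from becoming superlinear and is essential for the homotopy to remain admissible on the whole of $\mathcal{B}_{R}$.
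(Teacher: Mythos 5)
Your proposal is correct and follows the paper's overall architecture (uniform a priori bound for the zeros of $\mathcal{H}(t,\cdot,\cdot)$, then homotopy invariance plus nonexistence for $(\mathrm{P}_{0})$ via Lemma \ref{L9} to get degree $0$), but your Step 1 takes a genuinely different route. The paper argues by contradiction with a blow-up/normalisation scheme: it assumes $\theta_{n}=\Vert u_{1,n}\Vert\to\infty$, sets $\hat{u}_{1,n}=u_{1,n}/\theta_{n}$, invokes Corollary \ref{C} (the Mean Value Theorem tool) to extract the factor $\theta_{n}^{-(p_{1}(x)-1)}$ from the integrals, passes to the limit via the $S_{+}$ property to get $\Vert\hat{u}_{1}\Vert=1$ with $\hat{u}_{1}=\hat{u}_{1}^{+}$, and contradicts the Rayleigh characterisation \eqref{71} through $(1-t)J_{1}<\lambda_{1,p_{1}}$. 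You instead prove the bound directly: testing with $u_{i}$, using the $\eta$-refinement of \eqref{32} supplied by $(\mathrm{H}.3)$ and $(\mathrm{H}'.1)(\mathrm{ii})$, the pointwise bound $(u_{i}^{+})^{p_{i}(x)-1}u_{i}/(\max\{1,\Vert u\Vert\})^{p_{i}(x)-1}\leq|u_{i}|^{p_{i}(x)}$, and \eqref{71} to absorb the coefficient $(J_{i}+\eta)/\lambda_{1,p_{i}}<1$, leaving only sublinear remainders that Lemma \ref{L1} converts into a norm bound since $p_{i}^{-}>1$. Your route is shorter, avoids the delicate limit passage and the $S_{+}$ argument, and notably does not need Corollary \ref{C} at all for this proposition; what it gives up is the structural information the blow-up argument yields (the sign and normalisation of the limit profile), which the paper does not actually need here either. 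Your Step 2 coincides with the paper's, and your observation that Lemma \ref{L9} only uses $\max\{1,\Vert u\Vert\}\geq 1$, so the exact form of the normalising norm is immaterial, is accurate.
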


\begin{proof}
We claim that the solution set of problem $(\mathrm{P}_{t})$ is uniformly
bounded in $W_{0}^{1,p_{1}(x)}(\Omega )\times W_{0}^{1,p_{2}(x)}(\Omega )$
with respect to $t\in \lbrack 0,1]$. To do so, suppose by contradiction that
for every positive integer $n$ there exist $t_{n}\in \lbrack 0,1]$ and a
solution $(u_{1,n},u_{2,n})$ of $(\mathrm{P}_{t_{n}})$ such that $%
t_{n}\rightarrow t\in \lbrack 0,1]$ and $\Vert (u_{1,n},u_{2,n})\Vert _{%
\mathcal{M}_{p}}\rightarrow \infty $ as $n\rightarrow \infty $. We have%
\begin{equation}
\left\{ 
\begin{array}{l}
\int_{\Omega }|\nabla u_{1,n}|^{p_{1}(x)-2}\nabla u_{1,n}\nabla \varphi
_{1}\,\mathrm{d}x=\int_{\Omega }f_{1,t_{n}}(x,u_{1,n},u_{2,n})\varphi _{1}%
\text{ }\mathrm{d}x \\ 
\int_{\Omega }|\nabla u_{2,n}|^{p_{2}(x)-2}\nabla u_{2,n}\nabla \varphi
_{2}\,\mathrm{d}x=\int_{\Omega }f_{2,t_{n}}(x,u_{1,n},u_{2,n})\varphi _{2}%
\text{ }\mathrm{d}x,%
\end{array}%
\right.   \label{10}
\end{equation}%
for all $\varphi _{i}\in W_{0}^{1,p_{i}(x)}(\Omega )$. Without loss of
generality we may admit that 
\begin{equation}
\begin{array}{c}
\theta _{n}:=\Vert u_{1,n}\Vert \rightarrow \infty \text{ as }n\rightarrow
\infty .%
\end{array}
\label{11}
\end{equation}%
Denote 
\begin{equation}
\hat{u}_{1,n}:=\frac{1}{\theta _{n}}u_{1,n}\in W_{0}^{1,p_{1}(x)}(\Omega ).
\label{11*}
\end{equation}%
Then, there exists $\hat{u}_{1}\in W_{0}^{1,p_{1}(x)}(\Omega )$ such that $%
\hat{u}_{1,n}\rightarrow \hat{u}_{1}$ weakly in $W_{0}^{1,p_{1}(x)}(\Omega )$%
, strongly in $L^{p_{1}(x)}(\Omega )$ and $a.e.$ in $\Omega .$ Putting $%
\varphi _{1}=\hat{u}_{1,n}-\hat{u}_{1}$, we have 
\begin{equation*}
\begin{array}{l}
\int_{\Omega }|\nabla \hat{u}_{1,n}|^{p_{1}(x)-2}\nabla \hat{u}_{1,n}\nabla
\varphi _{1}\,\mathrm{d}x=\int_{\Omega }\frac{1}{\theta _{n}^{p_{1}(x)-1}}%
|\nabla u_{1,n}|^{p_{1}(x)-2}\nabla u_{1,n}\nabla \varphi _{1}\,\mathrm{d}x
\\ 
=\int_{\Omega }\frac{1}{\theta _{n}^{p_{1}(x)-1}}|\nabla
u_{1,n}|^{p_{1}(x)-2}(\nabla u_{1,n}\nabla \varphi _{1}\chi _{\{\varphi
_{1}\geq 0\}}+\nabla u_{1,n}\nabla \varphi _{1}\chi _{\{\varphi _{1}<0\}})\,%
\mathrm{d}x.%
\end{array}%
\end{equation*}%
Noticing that 
\begin{equation*}
\begin{array}{l}
\int_{\Omega }\frac{1}{\theta _{n}^{p_{1}(x)-1}}|\nabla
u_{1,n}|^{p_{1}(x)-2}\nabla u_{1,n}\nabla \varphi _{1}\chi _{\{\varphi
_{1}\geq 0\}}\,\mathrm{d}x \\ 
=\int_{\Omega }\frac{1}{\theta _{n}^{p_{1}(x)-1}}|\nabla
u_{1,n}|^{p_{1}(x)-2}\nabla u_{1,n}\nabla \left( \varphi _{1}\chi
_{\{\varphi _{1}\geq 0\}}\right) \,\mathrm{d}x,%
\end{array}%
\end{equation*}%
\begin{equation*}
\begin{array}{l}
\int_{\Omega }\frac{1}{\theta _{n}^{p_{1}(x)-1}}|\nabla
u_{1,n}|^{p_{1}(x)-2}\nabla u_{1,n}\nabla \varphi _{1}\chi _{\{\varphi
_{1}<0\}}\,\mathrm{d}x \\ 
=\int_{\Omega }\frac{1}{\theta _{n}^{p_{1}(x)-1}}|\nabla
u_{1,n}|^{p_{1}(x)-2}\nabla u_{1,n}\nabla \left( \varphi _{1}\chi
_{\{\varphi _{1}<0\}}\right) \,\mathrm{d}x,%
\end{array}%
\end{equation*}%
by $(\mathrm{H}^{\prime }.1)(\mathrm{i})$, the successive application of
Corollary \ref{C} for $\varphi =\varphi _{1}\chi _{\{\varphi _{1}\geq 0\}}$
and $\varphi =-\varphi _{1}\chi _{\{\varphi _{1}<0\}}$ guarantee the
existence of $x_{0},\hat{x}_{0}\in \Omega $ such that 
\begin{equation*}
\begin{array}{l}
\int_{\Omega }\frac{1}{\theta _{n}^{p_{1}(x)-1}}|\nabla
u_{1,n}|^{p_{1}(x)-2}\nabla u_{1,n}\nabla \left( \varphi _{1}\chi
_{\{\varphi _{1}\geq 0\}}\right)  \\ 
=\frac{1}{\theta _{n}^{p_{1}(x_{0})-1}}\int_{\Omega }|\nabla
u_{1,n}|^{p_{1}(x)-2}\nabla u_{1,n}\nabla \left( \varphi _{1}\chi
_{\{\varphi _{1}\geq 0\}}\right) 
\end{array}%
\end{equation*}%
and%
\begin{equation*}
\begin{array}{l}
-\int_{\Omega }\frac{1}{\theta _{n}^{p_{1}(x)-1}}|\nabla
u_{1,n}|^{p_{1}(x)-2}\nabla u_{1,n}\nabla \left( -\varphi _{1}\chi
_{\{\varphi _{1}<0\}}\right)  \\ 
=-\frac{1}{\theta _{n}^{p_{1}(\hat{x}_{0})-1}}\int_{\Omega }|\nabla
u_{1,n}|^{p_{1}(x)-2}\nabla u_{1,n}\nabla \left( -\varphi _{1}\chi
_{\{\varphi _{1}<0\}}\right) .%
\end{array}%
\end{equation*}%
Thence%
\begin{equation*}
\begin{array}{l}
\int_{\Omega }|\nabla \hat{u}_{1,n}|^{p_{1}(x)-2}\nabla \hat{u}_{1,n}\nabla
\varphi _{1}\,\mathrm{d}x \\ 
=\frac{1}{\theta _{n}^{p_{1}(x_{0})-1}}\int_{\Omega }|\nabla
u_{1,n}|^{p_{1}(x)-2}\nabla u_{1,n}\nabla \varphi _{1}\chi _{\{\varphi
_{1}\geq 0\}} \\ 
\text{ \ \ }+\frac{1}{\theta _{n}^{p_{1}(\hat{x}_{0})-1}}\int_{\Omega
}|\nabla u_{1,n}|^{p_{1}(x)-2}\nabla u_{1,n}\nabla \varphi _{1}\chi
_{\{\varphi _{1}<0\}} \\ 
=(\frac{1}{\theta _{n}^{p_{1}(x_{0})-1}}+\frac{1}{\theta _{n}^{p_{1}(\hat{x}%
_{0})-1}})\int_{\Omega }|\nabla u_{1,n}|^{p_{1}(x)-2}\nabla u_{1,n}\nabla
\varphi _{1} \\ 
=(\frac{1}{\theta _{n}^{p_{1}(x_{0})-1}}+\frac{1}{\theta _{n}^{p_{1}(\hat{x}%
_{0})-1}})\int_{\Omega }f_{1,t_{n}}(x,u_{1,n},u_{2,n})\varphi _{1}\text{ }%
\mathrm{d}x,%
\end{array}%
\end{equation*}%
which, by (\ref{12}), is equivalent to%
\begin{equation}
\begin{array}{l}
\int_{\Omega }|\nabla \hat{u}_{1,n}|^{p_{1}(x)-2}\nabla \hat{u}_{1,n}\nabla (%
\hat{u}_{1,n}-\hat{u}_{1})\,\mathrm{d}x \\ 
=(\frac{1}{\theta _{n}^{p_{1}(x_{0})-1}}+\frac{1}{\theta _{n}^{p_{1}(\hat{x}%
_{0})-1}})\left[ \int_{\Omega }t_{n}f_{1}(x,u_{1,n},u_{2,n})(\hat{u}_{1,n}-%
\hat{u}_{1})\text{ }\mathrm{d}x\right.  \\ 
\left. +(1-t_{n})\int_{\Omega }\left( J_{1}(\frac{u_{1,n}^{+}}{\max
\{1,\Vert u_{1,n}\Vert \}})^{p_{1}(x)-1}+\delta \lambda _{1,p_{1}}\phi
_{1,p_{1}}^{p_{1}(x)-1}\right) (\hat{u}_{1,n}-\hat{u}_{1})\text{ }\mathrm{d}x%
\right] .%
\end{array}
\label{13}
\end{equation}%
Thus, bearing in mind $(\mathrm{H}^{\prime }.1)(\mathrm{i})$ and (\ref{11}),
one gets%
\begin{equation*}
\begin{array}{l}
\left\vert \int_{\Omega }|\nabla \hat{u}_{1,n}|^{p_{1}(x)-2}\nabla \hat{u}%
_{1,n}\nabla (\hat{u}_{1,n}-\hat{u}_{1})\,\mathrm{d}x\right\vert  \\ 
\leq \frac{2}{\theta _{n}^{p_{1}^{-}-1}}\left[ \int_{\Omega
}t_{n}f_{1}(x,u_{1,n},u_{2,n})\left\vert \hat{u}_{1,n}-\hat{u}%
_{1}\right\vert \text{ }\mathrm{d}x\right.  \\ 
\text{ \ \ \ \ \ \ \ \ \ \ \ }\left. +(1-t_{n})\int_{\Omega }\left( J_{1}(%
\frac{u_{1,n}^{+}}{\max \{1,\Vert u_{1,n}\Vert \}})^{p_{1}(x)-1}+\delta
\lambda _{1,p_{1}}\phi _{1,p_{1}}^{p_{1}(x)-1}\right) \left\vert \hat{u}%
_{1,n}-\hat{u}_{1}\right\vert \text{ }\mathrm{d}x\right] ,%
\end{array}%
\end{equation*}%
as well as%
\begin{equation}
\begin{array}{l}
\frac{2J_{1}}{\theta _{n}^{p_{1}^{-}-1}}\int_{\Omega }\frac{%
(u_{1,n}^{+})^{p_{1}(x)-1}}{(\max \{1,\Vert u_{1,n}\Vert \})^{p_{1}(x)-1}}|%
\hat{u}_{1,n}-\hat{u}_{1}|\,\mathrm{d}x \\ 
=\frac{2J_{1}}{\theta _{n}^{p_{1}^{-}-1}}\int_{\Omega }\frac{%
(u_{1,n}^{+})^{p_{1}(x)-1}}{\max \{1,\theta _{n}\}^{p_{1}(x)-1}}|\hat{u}%
_{1,n}-\hat{u}_{1}|\,\mathrm{d}x \\ 
=\frac{2J_{1}}{\theta _{n}^{p_{1}^{-}-1}}\int_{\Omega }(\hat{u}%
_{1,n}^{+})^{p_{1}(x)-1}|\hat{u}_{1,n}-\hat{u}_{1}|\,\mathrm{d}x\leq
\int_{\Omega }(\hat{u}_{1,n}^{+})^{p_{1}(x)-1}|\hat{u}_{1,n}-\hat{u}_{1}|\,%
\mathrm{d}x%
\end{array}%
\end{equation}%
and%
\begin{equation*}
\begin{array}{l}
\frac{2\delta \lambda _{1,p_{1}}}{\theta _{n}^{p_{1}^{-}-1}}\int_{\Omega
}\phi _{1,p_{1}}^{p_{1}(x)-1}|\hat{u}_{1,n}-\hat{u}_{1}|\,\mathrm{d}x\leq 
\frac{2\delta \lambda _{1,p_{1}}}{\theta _{n}^{p_{1}^{-}-1}}\int_{\Omega
}\left\Vert \phi _{1,p_{1}}\right\Vert _{\infty }^{p_{1}(x)-1}|\hat{u}_{1,n}-%
\hat{u}_{1}|\,\mathrm{d}x \\ 
\leq \frac{2\delta \lambda _{1,p_{1}}}{\theta _{n}^{p_{1}^{-}-1}}\max
\{1,\left\Vert \phi _{1,p_{1}}\right\Vert _{\infty
}\}^{p_{1}^{+}-1}\int_{\Omega }|\hat{u}_{1,n}-\hat{u}_{1}|\,\mathrm{d}x\leq
\int_{\Omega }|\hat{u}_{1,n}-\hat{u}_{1}|\,\mathrm{d}x.%
\end{array}%
\end{equation*}%
On the other hand, assumption $(\mathrm{H.3})$ yields $\bar{\eta}>0$ and $%
\kappa =\kappa (\bar{\eta})>0$ fulfilling%
\begin{equation}
|s_{1}|>\kappa \Longrightarrow |f_{1}(x,s_{1},s_{2})|<\bar{\eta}%
|s_{1}|^{p_{1}^{-}-1}\text{ for }x\in \Omega ,\text{ }s_{2}\in 
\mathbb{R}
.  \label{14}
\end{equation}%
Given $n\in 
\mathbb{N}
$ observe that%
\begin{equation*}
\begin{array}{l}
\frac{2t_{n}}{\theta _{n}^{p_{1}^{-}-1}}\int_{\Omega
}f_{1}(x,u_{1,n},u_{2,n})|\hat{u}_{1,n}-\hat{u}_{1}|\,\mathrm{d}x \\ 
=\frac{2t_{n}}{\theta _{n}^{p_{1}^{-}-1}}\left[ \int_{|u_{n}|>\kappa
}f_{1}(x,u_{1,n},u_{2,n})|\hat{u}_{1,n}-\hat{u}_{1}|\,\mathrm{d}x\right.  \\ 
\text{ \ \ \ \ \ \ \ \ \ \ \ }\left. +\int_{|u_{n}|\leq \kappa
}f_{1}(x,u_{1,n},u_{2,n})|\hat{u}_{1,n}-\hat{u}_{1}|\,\mathrm{d}x\right] .%
\end{array}%
\end{equation*}%
Thus, $($\textrm{H.}$3)$ and (\ref{14}) entail 
\begin{equation}
\begin{array}{l}
\int_{|u_{n}|>\kappa }\frac{2t_{n}}{\theta _{n}^{p_{1}^{-}-1}}%
f_{1}(x,u_{1,n},u_{2,n})|\hat{u}_{1,n}-\hat{u}_{1}|\,\mathrm{d}x \\ 
=\int_{|u_{n}|>\kappa }2t_{n}|\hat{u}_{1,n}|^{p_{1}^{-}-1}\frac{%
f_{1}(x,\theta _{n}\hat{u}_{n},v_{n})}{(\theta _{n}|\hat{u}%
_{n}|)^{p_{1}^{-}-1}}|\hat{u}_{1,n}-\hat{u}_{1}|\,\mathrm{d}x \\ 
\leq 2\bar{\eta}\int_{|u_{n}|>\kappa }|\hat{u}_{1,n}|^{p_{1}^{-}-1}|\hat{u}%
_{1,n}-\hat{u}_{1}|\,\mathrm{d}x,%
\end{array}
\label{16}
\end{equation}%
while, by $(\mathrm{H}^{\prime }.1)(\mathrm{ii}$) and (\ref{11}), we have%
\begin{equation}
\begin{array}{l}
\int_{|u_{n}|\leq \kappa }|\frac{2t_{n}}{\theta _{n}^{p_{1}^{-}-1}}%
f_{1}(x,u_{1,n},u_{2,n})||\hat{u}_{1,n}-\hat{u}_{1}|\,\mathrm{d}x\leq
M\int_{|u_{n}|\leq \kappa }|\hat{u}_{1,n}-\hat{u}_{1}|\,\mathrm{d}x \\ 
\leq M\int_{\Omega }|\hat{u}_{1,n}-\hat{u}_{1}|\,\mathrm{d}x,%
\end{array}
\label{18}
\end{equation}%
Thus, passing to the limit as $n\rightarrow \infty $, Lebesgue dominate
convergence theorem implies 
\begin{equation}
\lim_{n\rightarrow \infty }\left\langle -\Delta _{p_{1}(x)}\hat{u}_{1,n},%
\hat{u}_{1,n}-\hat{u}_{1}\right\rangle =0.  \label{17}
\end{equation}%
Consequently, the $S_{+}$ property of the operator $-\Delta _{p_{1}(x)}$
shows that%
\begin{equation*}
\hat{u}_{1,n}\rightarrow \hat{u}_{1}\text{ strongly in }W_{0}^{1,p_{1}(x)}(%
\Omega )\text{ with }\Vert \hat{u}_{1}\Vert =1.
\end{equation*}%
Acting in (\ref{13}) with $\varphi _{1}=\hat{u}_{1}$ instead of $\varphi
_{1}=\hat{u}_{1,n}-\hat{u}_{1}$ and passing to the limit as $n\rightarrow
\infty $ one gets%
\begin{equation}
\int_{\Omega }|\nabla \hat{u}|^{p_{1}(x)}\,\mathrm{d}x\leq
(1-t)J_{1}\int_{\Omega }(\hat{u}_{1}^{+})^{p_{1}(x)}\,\mathrm{d}x,\text{ for 
}t\in \lbrack 0,1].  \label{19}
\end{equation}%
Testing with $-\hat{u}_{1}^{-}$ in (\ref{13}), using $(\mathrm{H}^{\prime
}.1)\mathrm{(i)}$ and passing to the limit leads to $\hat{u}_{1}=\hat{u}%
_{1}^{+}$, which is nonzero because $\Vert \hat{u}_{1}\Vert =1$. Thus%
\begin{equation}
\int_{\Omega }|\nabla \hat{u}_{1}|^{p_{1}(x)}\,\mathrm{d}x\leq
(1-t)J_{1}\int_{\Omega }\hat{u}_{1}^{p_{1}(x)}\,\mathrm{d}x,\text{ for }t\in
\lbrack 0,1].  \label{20}
\end{equation}%
If $t=1$ then $\hat{u}_{1}=0$ which contradicts the fact that $\hat{u}\neq 0.
$ Assume $t\in \lbrack 0,1).$ By (\ref{71}) and (\ref{20}) it follows that 
\begin{equation*}
(\lambda _{1,p_{1}}-(1-t)J_{1}))\int_{\Omega }\hat{u}_{1}^{p_{1}(x)}\,%
\mathrm{d}x\leq 0,
\end{equation*}%
which is a contradiction because $(1-t)J_{1}<\lambda _{1,p_{1}}$ for $t\in
\lbrack 0,1]$ (see (\ref{43})) and $\hat{u}_{1}>0.$ The claim is thus proved.

As a consequence of the previous claim, the Leray-Schauder topological
degree $\deg (\mathcal{H}(t,\cdot ,\cdot ),\mathcal{B}_{R},0)$ is well
defined for every $t\in \lbrack 0,1]$.

The task is now to prove (\ref{21}). Thanks to the homotopy invariance
property of the Leray-Schauder topological degree, the first equality in (%
\ref{21}) is fulfilled. For $t=0$, $(\mathrm{P}_{0})$ is expressed as a
decoupled system:%
\begin{equation*}
(\mathrm{P}_{0})\qquad \left\{ 
\begin{array}{ll}
-\Delta _{p_{i}(x)}u_{i}=J_{i}\frac{(u_{i}^{+})^{p_{i}(x)-1}}{(\max
\{1,\Vert u_{i}\Vert \})^{p_{i}(x)-1}}+\delta \lambda _{1,p_{i}}\phi
_{1,p_{i}(x)}^{p_{i}(x)-1} & \text{in }\Omega \\ 
u_{i}=0 & \text{on }\partial \Omega ,%
\end{array}%
\right.
\end{equation*}%
which, by Lemma \ref{L9}, has no solutions. Thus, the second equality in (%
\ref{21}) holds true. This completes the proof.
\end{proof}

\mathstrut

\subsection{Topological degree on $\mathcal{B}_{\tilde{R}}$}

We slightly modify the homotopy $\mathcal{H}$ related to problem $(\mathrm{P}%
_{t})$. Specifically, let us consider for every $t\in \lbrack 0,1]$ the
Dirichlet problem:%
\begin{equation*}
(\mathrm{\tilde{P}}_{t})\qquad \left\{ 
\begin{array}{ll}
-\Delta _{p_{i}(x)}u_{i}=\tilde{f}_{i,t}(x,u,v) & \text{in }\Omega \\ 
u_{i}=0 & \text{on }\partial \Omega ,%
\end{array}%
\right.
\end{equation*}%
with 
\begin{equation}
\begin{array}{l}
\tilde{f}_{i,t}(x,u_{1},u_{2})=tf_{i}(x,u_{1},u_{2})+(1-t)J_{i}\frac{%
(u_{i}^{+})^{p_{i}(x)-1}}{(\max \{1,\Vert u_{i}\Vert \})^{p_{i}(x)-1}},%
\end{array}%
\end{equation}%
where $J_{i}$ satisfies (\ref{43}).

For a constant $\tilde{R}>0$, let define the homotopy 
\begin{equation*}
\begin{array}{lll}
\mathcal{\tilde{H}}: & [0,1]\times \overline{\mathcal{B}_{R}} & \rightarrow
W^{-1,p_{1}^{\prime }(x)}(\Omega )\times W^{-1,p_{2}^{\prime }(x)}(\Omega )
\\ 
& (t,u_{1},u_{2}) & \rightarrow (\mathcal{\tilde{H}}_{1}(t,u_{1},u_{2}),%
\mathcal{\tilde{H}}_{2}(t,u_{1},u_{2}))%
\end{array}%
\end{equation*}%
where $\mathcal{\tilde{H}}_{i}$ are given by 
\begin{equation*}
\begin{array}{l}
\left\langle \mathcal{\tilde{H}}_{i}(t,u_{1},u_{2}),\varphi
_{i}\right\rangle =\int_{\Omega }|\nabla u_{i}|^{p_{i}(x)-2}\nabla
u_{i}\nabla \varphi _{i}\,\mathrm{d}x-\int_{\Omega }\tilde{f}%
_{i,t}(x,u_{1},u_{2})\varphi _{i}\text{ }\mathrm{d}x,%
\end{array}%
\end{equation*}%
for $\varphi _{i}\in W_{0}^{1,p_{i}(x)}(\Omega ),$ and $\overline{\mathcal{B}%
_{\tilde{R}}}$ is the closure of $\mathcal{B}_{\tilde{R}}$ in $%
W_{0}^{1,p_{1}(x)}(\Omega )\times W_{0}^{1,p_{2}(x)}(\Omega )$ with%
\begin{equation*}
\mathcal{B}_{\tilde{R}}:=\left\{ (u_{1},u_{2})\in W_{0}^{1,p_{1}(x)}(\Omega
)\times W_{0}^{1,p_{2}(x)}(\Omega ):\,\Vert (u_{1},u_{2})\Vert <\tilde{R}%
\right\}
\end{equation*}

\begin{proposition}
\label{P2} Assume that condition $(\mathrm{H}^{\prime }.1)$ and $(\mathrm{H}%
.3)$ are satisfied. If $\tilde{R}>0$ is sufficiently large, then the
Leray-Schauder topological degree 
\begin{equation*}
\deg (\mathcal{\tilde{H}}(t,\cdot ,\cdot ),\mathcal{B}_{\tilde{R}},0)
\end{equation*}%
is well defined for every $t\in \lbrack 0,1]$. Moreover, it holds 
\begin{equation}
\begin{array}{c}
\deg (\mathcal{\tilde{H}}(1,\cdot ,\cdot ),\mathcal{B}_{\tilde{R}},0)=\deg (%
\mathcal{\tilde{H}}(0,\cdot ,\cdot ),\mathcal{B}_{\tilde{R}},0)=1.%
\end{array}
\label{33}
\end{equation}
\end{proposition}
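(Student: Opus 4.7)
The plan is to adapt the scheme used for Proposition \ref{P1}, with two main modifications: the a priori bound for $(\tilde{\mathrm{P}}_{t})$ is actually easier to obtain (the extra term $\delta \lambda_{1,p_i}\phi_{1,p_i}^{p_i(x)-1}$ is absent, and it only shrank the right-hand side in the earlier argument), while the computation of the degree at $t=0$ must now be based on uniqueness of the zero solution rather than on non-existence (since $(\tilde{\mathrm{P}}_{0})$ clearly admits $u=0$ as a solution).

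For the a priori bound I would run the scaling argument of Proposition \ref{P1} essentially verbatim. Suppose for contradiction that there exist $t_n \in [0,1]$ and solutions $(u_{1,n}, u_{2,n})$ of $(\tilde{\mathrm{P}}_{t_n})$ with $t_n \to t$ and $\theta_n := \|u_{1,n}\| \to \infty$. Rescaling $\hat{u}_{1,n} := u_{1,n}/\theta_n$, I would apply Corollary \ref{C} to absorb the $\theta_n^{p_1(x)-1}$ factor into a pointwise evaluation, exploit $(\mathrm{H}'.1)(\mathrm{ii})$ together with $(\mathrm{H}.3)$ to pass to the limit in the nonlinear term, and use the $S_{+}$ property of $-\Delta_{p_1(x)}$ to deduce $\hat{u}_{1,n} \to \hat{u}_1$ strongly with $\|\hat{u}_1\|=1$. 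Testing with $-\hat{u}_1^-$ and invoking $(\mathrm{H}'.1)(\mathrm{i})$ yields $\hat{u}_1 \geq 0$, and then testing with $\hat{u}_1$ produces
\[
\int_\Omega |\nabla \hat{u}_1|^{p_1(x)}\, dx \leq (1-t) J_1 \int_\Omega \hat{u}_1^{p_1(x)}\, dx,
\]
which combined with (\ref{71}) and the constraint (\ref{43}) on $J_1$ forces a contradiction in both cases $t=1$ and $t\in[0,1)$. Hence for $\tilde R$ large every solution of $(\tilde{\mathrm{P}}_t)$ lies in $\mathcal B_{\tilde R}$, the degree $\deg(\tilde{\mathcal H}(t,\cdot,\cdot),\mathcal B_{\tilde R},0)$ is well defined for all $t\in[0,1]$, and homotopy invariance gives the first equality in (\ref{33}).

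To compute the degree at $t=0$, I would first show that $(0,0)$ is the only solution of the decoupled system $(\tilde{\mathrm{P}}_{0})$. Testing each equation with $u_i$ (noting $u_i (u_i^+)^{p_i(x)-1} = (u_i^+)^{p_i(x)}$) and using (\ref{71}) gives
\[
\int_\Omega |\nabla u_i|^{p_i(x)}\, dx = J_i \int_\Omega \frac{(u_i^+)^{p_i(x)}}{(\max\{1,\|u_i\|\})^{p_i(x)-1}}\, dx \leq J_i \int_\Omega |u_i|^{p_i(x)}\, dx \leq \frac{J_i}{\lambda_{1,p_i}} \int_\Omega |\nabla u_i|^{p_i(x)}\, dx,
\]
and the strict inequality $J_i<\lambda_{1,p_i}$ in (\ref{43}) forces $u_i\equiv 0$. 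I would then introduce an auxiliary homotopy $s\in[0,1]$ deforming $\tilde{\mathcal H}(0,\cdot,\cdot)$ to the pure operator $(u_1,u_2)\mapsto(-\Delta_{p_1(x)}u_1,-\Delta_{p_2(x)}u_2)$ by multiplying the nonlinearities by $s$. The same test-with-$u_i$ argument, with $J_i$ replaced by $sJ_i<J_i<\lambda_{1,p_i}$, shows that $0$ is the only solution along the deformation, so no solution escapes through $\partial\mathcal B_{\tilde R}$ and homotopy invariance applies. The degree of the pure $p(x)$-Laplacian system at the origin equals $1$ by standard Leray--Schauder degree theory for bounded, continuous, coercive operators of type $(S_+)$, for instance by a further deformation to the duality mapping of $W_0^{1,p_i(x)}(\Omega)$.

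The main technical delicacy lies in this last step: because $-\Delta_{p_i(x)}$ is not homogeneous, one cannot simply read off the index from a linearization, and the value $+1$ has to be extracted either by deforming to the duality map or by invoking a classical result on strictly monotone coercive operators. Everything else is a faithful repetition of the argument in Proposition \ref{P1}, with the key role played by the eigenvalue characterization (\ref{71}), Corollary \ref{C}, and the smallness constraint (\ref{43}) on the $J_i$.
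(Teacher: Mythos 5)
Your proposal is correct and follows essentially the same route as the paper: the a priori bound is obtained by repeating the rescaling argument of Proposition \ref{P1} (which is all the paper itself says), and the degree at $t=0$ is computed by showing $(\tilde{\mathrm{P}}_{0})$ admits only the trivial solution via testing with $u_i$, the eigenvalue characterization \eqref{71}, and the constraint \eqref{43}, then invoking homotopy invariance and normalization. In fact you are more explicit than the paper on the final step (the auxiliary $s$-homotopy to the pure $p(x)$-Laplacian and the standard index of a coercive $(S_+)$ operator), where the paper merely appeals to ``the definition of Leray--Schauder topological degree together with its homotopy invariance property.''
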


\begin{proof}
Arguing as in the proof of Proposition \ref{P1} we show that the solution
set of problem $(\mathrm{\tilde{P}}_{t})$ is bounded in $W_{0}^{1,p_{1}(x)}(%
\Omega )\times W_{0}^{1,p_{2}(x)}(\Omega )$ uniformly with respect to $t\in
\lbrack 0,1]$. Thus, for $\tilde{R}>0$ is sufficiently large the
Leray-Schauder topological degree $\deg (\mathcal{\tilde{H}}(t,\cdot ,\cdot
),\mathcal{B}_{\tilde{R}},0)$ is well defined for every $t\in \lbrack 0,1]$.
Moreover, the first equality in (\ref{33}) is true thanks to the homotopy
invariance property of Leray-Schauder topological degree.

On the other hand, for $t=0$, $(\mathrm{\tilde{P}}_{0})$ is expressed as a
decoupled system:%
\begin{equation*}
(\mathrm{\tilde{P}}_{0})\qquad \left\{ 
\begin{array}{ll}
-\Delta _{p_{1}(x)}u=J_{1}\frac{(u^{+})^{p_{1}(x)-1}}{(\max \{1,\Vert u\Vert
\})^{p_{1}(x)-1}} & \text{in }\Omega \\ 
-\Delta _{p_{2}(x)}v=J_{2}\frac{(v^{+})^{p_{2}(x)-1}}{(\max \{1,\Vert v\Vert
\})^{p_{2}(x)-1}} & \text{in }\Omega \\ 
u,v=0 & \text{on }\partial \Omega ,%
\end{array}%
\right.
\end{equation*}%
which, since $J_{i}\in (0,\lambda _{1,p}(p^{-}-1))$, admits only the trivial
solution $(u,v)=(0,0)$. Then, from the definition of Leray-Schauder
topological degree together with its homotopy invariance property, the
equalities in (\ref{33}) hold true. This completes the proof.
\end{proof}

\subsection{Topological degree on $\mathcal{B}_{R}\backslash \overline{%
\mathcal{B}_{\hat{R}}}$}

Fix $\hat{R}>0$ in Proposition \ref{P2} so large that every element $%
(u_{1},u_{2})\ $in $[-\overline{u}_{1},\overline{u}_{1}]\times \lbrack -%
\overline{u}_{2},\overline{u}_{2}]$ belongs to $\mathcal{B}_{\hat{R}}$. Take 
$R>\hat{R}$, with $R$ so large to fulfill the conclusion of Proposition \ref%
{P1}. For this construction, it is essential to observe that $\hat{R}>0$ in
Proposition \ref{P2} and $R>0$ in Proposition \ref{P1} must necessarily
verify $\hat{R}<R$. This is the consequence of the weak comparison principle
in Lemma \ref{L6} applied to problems $(\mathrm{P}_{t})$ and $(\mathrm{%
\tilde{P}}_{t})$ making use of the inequality $\tilde{f}%
_{i,t}(x,s_{1},s_{2})<f_{i,t}(x,s_{1},s_{2}),$ for a.e. $x\in \Omega ,$ all $%
s_{1},s_{2}\in \mathbb{R},$ $t\in \lbrack 0,1).$ Hence, the strict inclusion 
$\overline{\mathcal{B}_{\hat{R}}}\subset \mathcal{B}_{R}$ is fulfilled.

In view of the expressions of the homotopies $\mathcal{H}$ and $\mathcal{%
\tilde{H}}$ used in Propositions \ref{P1} and \ref{P2}, it is seen that 
\begin{equation}
\mathcal{H}(1,\cdot ,\cdot )=\mathcal{\tilde{H}}(1,\cdot ,\cdot )\,\,\text{in%
}\ \overline{\mathcal{B}_{\hat{R}}}.  \label{50}
\end{equation}%
The Leray-Schauder degree $\deg (\mathcal{H}(1,\cdot ,\cdot ),\mathcal{B}%
_{R}\backslash \partial \mathcal{B}_{\hat{R}},0)$ of $\mathcal{H}(1,\cdot
,\cdot )$ on $\mathcal{B}_{R}\backslash \overline{\mathcal{B}_{\hat{R}}}$
makes sense according to (\ref{50}) because it was shown in Propositions \ref%
{P1} and \ref{P2} that $\mathcal{H}(1,\cdot ,\cdot )$ and $\mathcal{\tilde{H}%
}(1,\cdot ,\cdot )$ do not vanish on $\partial \mathcal{B}_{R}$ and $%
\partial \mathcal{B}_{\hat{R}}$, respectively. Then the excision property of
Leray-Schauder degree (see, e.g., \cite[p. 72]{MMP}) yields 
\begin{equation*}
\begin{array}{c}
\deg (\mathcal{H}(1,\cdot ,\cdot ),\mathcal{B}_{R},0)=\deg (\mathcal{H}%
(1,\cdot ,\cdot ),\mathcal{B}_{R}\backslash \partial \mathcal{B}_{\hat{R}%
},0),%
\end{array}%
\end{equation*}%
whereas by virtue of the domain additivity property of Leray-Schauder degree
it turns out that 
\begin{equation*}
\begin{array}{c}
\deg (\mathcal{H}(1,\cdot ,\cdot ),\mathcal{B}_{R},0)=\deg (\mathcal{H}%
(1,\cdot ,\cdot ),\mathcal{B}_{\hat{R}},0)+\deg (\mathcal{H}(1,\cdot ,\cdot
),\mathcal{B}_{R}\backslash \overline{\mathcal{B}_{\hat{R}}},0).%
\end{array}%
\end{equation*}%
Combining the preceding equalities with (\ref{21}) and (\ref{33}), we infer
that 
\begin{equation*}
\begin{array}{c}
\deg (\mathcal{H}(1,\cdot ,\cdot ),\mathcal{B}_{R}\backslash \overline{%
\mathcal{B}_{\hat{R}}},0)=-1.%
\end{array}%
\end{equation*}%
Therefore, there exists $(\breve{u}_{1},\breve{u}_{2})\in \mathcal{B}%
_{R}\backslash \overline{\mathcal{B}_{\hat{R}}}$ satisfying $\mathcal{H}(1,%
\breve{u}_{1},\breve{u}_{2})=0$. This implies that the pair $(\breve{u}_{1},%
\breve{u}_{2})$ is a solution of system $(\mathrm{P})$ belonging to the set $%
\mathcal{B}_{R}\backslash \overline{\mathcal{B}_{\hat{R}}}$.

\subsection{Proof of Theorem \protect\ref{T2}}

Since $(\breve{u}_{1},\breve{u}_{2})\in \mathcal{B}_{R}\backslash \overline{%
\mathcal{B}_{\hat{R}}}$ and the ordered rectangle $[-\overline{u}_{1},%
\overline{u}_{1}]\times \lbrack -\overline{u}_{2},\overline{u}_{2}]$ is
contained in the ball $\mathcal{B}_{\hat{R}}$, we have that $(\breve{u}_{1},%
\breve{u}_{2})\not\in \lbrack -\overline{u}_{1},\overline{u}_{1}]\times
\lbrack -\overline{u}_{2},\overline{u}_{2}]$. In particular, we note that $(%
\breve{u}_{1},\breve{u}_{2})\not=(u_{1,+},u_{2,+})$, so $(\breve{u}_{1},%
\breve{u}_{2})$ is a second nontrivial positive solution of system $(\mathrm{%
P})$. This completes the proof.

\begin{acknowledgement}
A. Moussaoui was supported by the Directorate-General of Scientific Research
and Technological Development (DGRSDT).
\end{acknowledgement}

\end{document}